\numberwithin{equation}{section}
\numberwithin{equation}{section}
\newtheorem{defi}{Definition}[section]
\newtheorem{thm}[defi]{Theorem}
\newtheorem{lemma}[defi]{Lemma}
\newtheorem{corollary}[defi]{Corollary}
\newtheorem{proposition}[defi]{Proposition}
\newtheorem{remark}[defi]{Remark}
\def\0{{\bf{0}}}
\def\e{\mbox{exp}}
\def\Go{\hat{G}^{(0)}_t}
\def\Gk{\hat{G}^{(k)}_t}
\def\n{\nonumber}
\def\M{\mathcal{M}}
\newcommand{\R}{\mathbf R}
\newcommand{\N}{\mathbf N}
\def\X{\mathcal{X}}
\newcommand{\beq}{\begin{equation}}
\newcommand{\eeq}{\end{equation}}
\def \dis{\displaystyle}
\def \pt{\partial}
\def \al{\alpha}
\def\XXint#1#2#3{{\setbox0=\hbox{$#1{#2#3}{\int}$}
\vcenter{\hbox{$#2#3$}}\kern-.5\wd0}}
\begin{document}

\title[Modified Heat Kernel]
{Long-time Asymptotic profiles of the n-D heat equation and Modified heat kernels}

\author{Kana Minami}

\address{Department of Mathematical and Physical Science, Nara Women's University, 630-8506, Nara, Japan}

\email{minami@cc.nara-wu.ac.jp}

\author{Taku Yanagisawa}

\address{Department of Mathematical and Physical Science, Nara Women's University, 630-8506, Nara, Japan}

\email{taku@cc.nara-wu.ac.jp}

\subjclass[2010]{35K05, 35K08, 35K15, 35B40, 35C20, 30E05}

\keywords{heat equation, initial value problems, heat kernel, moments, asymptotic expansions, long-time asymptotic
profiles}

\maketitle

\begin{abstract}
This paper studies the long-time asymptotic behavior of solutions to the n-dimensional heat equation. We introduce modified heat 
kernels, which are scaled in size and shifted in spatial and time variables.
The scaled size, spatial and time shifts of the modified heat kernels are determined from higher-order moments of the initial data, 
by systematic use of Taylor's formula. Estimating the error terms carefully, we demonstrate that modified heat kernels provide 
fine long-time asymptotic profiles of the solution to the initial value problem of the n-dimensional heat equation.
\end{abstract}

\section{Introduction}

We study the asymptotic behavior of a solution to the $n-$dimensional ($n\geq 2$) heat equation
\begin{equation}\label{eqn:1.1}
u_t=\Delta u,\, x\in\R^n,\,t>0
\end{equation}
with the initial condition
\begin{equation}\label{eqn:1.2}
u(x,0)=f(x), \,x\in\R^n,
\end{equation}
where $u=u(x,t)=u(x_1,\dots,x_n,t)$ denotes an unknown function at $x=(x_1,\dots,x_n)\in\R^n$ and $t>0$, $f=f(x)=f(x_1,\dots,x_n)$ the initial data.

If the initial data $f$ is in $L^1(\R^n)$, the unique solution to (\ref{eqn:1.1}), (\ref{eqn:1.2}) is given explicitly by
\begin{equation}\label{eqn:1.3}
u(x,t)=\int_{\R^3} G_t(x-y) f(y)\,dy
\end{equation}
where $G_t(x)$ is the n-dimensional heat kernel defined by
\begin{equation}\label{eqn:1.4}
G_t(x)=\frac{1}{(4\pi t)^{\frac{n}{2}}}\e(-\frac{|x|^2}{4t}),\,\,\,\,|x|=\sqrt{\sum^n_{i=1} x_i^2}.
\end{equation}
It is known that the heat kernel $G_t(x)$ is a self-similar solution to (\ref{eqn:1.1}), which represents a generic long-time asymptotic profile of the solution to (\ref{eqn:1.1}), (\ref{eqn:1.2}) .
However, it might be possible to think that the solution to (\ref{eqn:1.1}), (\ref{eqn:1.2}) passing a long time shall keep its memory 
at the initial time. Therefore, long-time asymptotic behaviors of the solution to (\ref{eqn:1.1}),  
 (\ref{eqn:1.2}) shall be affected even faintly by the initial data $f$.

In this paper, we present a new ansatz which puts information on the initial data into the heat kernel. More precisely, we 
introduce $\it{modified\,heat\,kernels}$ in which the size, center, and variance of $G_t(x)$ are adjusted by higher-order 
moments of the initial data. 

We first introduce {\it the 0-th order modified heat kernel} $\Go$ given by
\begin{eqnarray}\label{eqn:1.5}
\Go(x)&=&\Go(x;x^*,t^*)\\
&=&\int_{\R^n} f(x)\,dx \,\cdot G_{t-t^*}(x-x^*) \n \\ \nonumber &=&\int_{\R^n} f(x)\,dx\,\,\frac{1}{\,(4\pi (t-t^*))^{\frac{n}{2}}\,}\,\e(-\frac{\,|x-x^*|^2\,}{4(t-t^*)})\,\, \mbox{for}\,\,t>t^*,\,\,x\in\R^n. \n \nonumber
\end{eqnarray}
Here $\int_{\R^n} f(x)\,dx$ represents the change of the size of $G_t(x)$, and $x^*=(x_1^*,\dots,x_n^*)$ and $t^*\in\R$ represent the spatial and time shifts of $G_t(x)$, respectively.

We will observe that the optimal spatial shift $x^*$ shall be determined by the first-order moments of $f$. In fact, the i-th component of the spatial shift $x^*$ is given by
\begin{equation}\label{eqn:1.6}
x^*_i \int_{\R^n} f(x)\,dx =\int_{\R^n} x_i f(x)\,dx,\,\,\,i=1,\dots,n.
\end{equation}
To determine the optimal time shift $t^*$, we find that the following additional condition for the first and second order moments of $f$ shall be required: there exists a constant $c_{\bf{0}}\in\R$ such that
\begin{equation}\label{eqn:a}\displaystyle
\frac{1}{\,2\,}\int_{\R^n} \, x_i x_j\,f(x)\,dx-\frac{1}{\,2\,}\int_{\R^n} \,f(x)\,dx\, x^*_i x^*_j=c_{\bf{0}}\,\delta_{ij}\quad \mbox{for\,all}\,\, i,j=1,\dots,n,
\end{equation}
where $x^*=(x_1^*,\dots,x_n^*)$ is the spatial shift given by (\ref{eqn:1.6}), and $\delta_{ij}$ the Kronecker's delta.

Now we provide a result stating that the 0-th order modified heat kernel gives an appropriate long-time asymptotic profile of the solution to (\ref{eqn:1.1}), (\ref{eqn:1.2}) in $\it{the\, non\!-\!degenerate\,case}$ that $\displaystyle \int_{\R^n}\,f(x)\,dx\neq 0$.
\begin{thm} Let $f\in L^1(\R^n)$ and $\int_{\R^n}\,(1+|x|)^2 |f(x)|\,dx<\infty$. Assume that
$$\int_{\R^n} f(x)\,dx\neq 0.$$
We take the spatial shift $x^*$ of $\Go(x)$ of (\ref{eqn:1.5}) as in
(\ref{eqn:1.6}). Additionally, the initial data $f$ is supposed to satisfy the condition (\ref{eqn:a}), and the time shift $t^*$ of $\Go(x)$ is taken to satisfy that
\begin{equation}\label{eqn:c}\displaystyle
c_{\bf{0}}+t^*\int_{\R^n} f(x)\,dx=0,
\end{equation}
where $c_{\bf{0}}$ is a constant in (\ref{eqn:a}).

Then the solution $u=u(x,t)$ to (\ref{eqn:1.1}), (\ref{eqn:1.2}) given by (\ref{eqn:1.3}) and $\Go(x;x^*,t^*)$ obey the estimate for $1\leq p \leq \infty$ such that
\begin{equation}\label{eqn:1.8}
\displaystyle \lim_{t\rightarrow\infty} t^{1+\frac{n}{2}(1-\frac{1}{p})}\|u(\cdot,t)-\Go(\cdot;x^*,t^*)\|_{L^p(\R^n)}=0.
\end{equation}
\end{thm}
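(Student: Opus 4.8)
The plan is to compare both $u(\cdot,t)$ and $\Go(\cdot;x^*,t^*)$ with the unmodified heat kernel $G_t$ through Taylor's formula, to cancel the low-order terms by exactly the moment relations (\ref{eqn:1.6}), (\ref{eqn:a}) and (\ref{eqn:c}), and then to bound the two resulting remainders by means of the elementary scaling identity $\|\partial^\alpha G_t\|_{L^p(\R^n)}=C_{\alpha,p}\,t^{-\frac{|\alpha|}{2}-\frac n2(1-\frac1p)}$, valid for every multi-index $\alpha$ and every $1\le p\le\infty$. Put $M:=\int_{\R^n}f\,dy\neq 0$. First I would insert into (\ref{eqn:1.3}) the second-order Taylor expansion of $y\mapsto G_t(x-y)$ about $y=0$, keeping the remainder in the form of a \emph{difference of second derivatives} (this is forced, since only two moments of $f$ are assumed finite):
\[
u(x,t)=M\,G_t(x)-\sum_i\Bigl(\int_{\R^n}y_if\,dy\Bigr)\partial_iG_t(x)+\frac12\sum_{i,j}\Bigl(\int_{\R^n}y_iy_jf\,dy\Bigr)\partial_i\partial_jG_t(x)+R_u(x,t),
\]
where $R_u(x,t)=\int_{\R^n}\bigl[\int_0^1(1-\theta)\sum_{i,j}y_iy_j(\partial_i\partial_jG_t(x-\theta y)-\partial_i\partial_jG_t(x))\,d\theta\bigr]f(y)\,dy$.

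Next I would expand $\Go(x;x^*,t^*)=M\,G_{t-t^*}(x-x^*)$ along the segment $\tau\mapsto M\,G_{t-\tau t^*}(x-\tau x^*)$, $\tau\in[0,1]$, using the heat equation $\partial_tG_t=\Delta G_t$ to convert every time derivative into a power of $\Delta$, which yields
\[
\Go(x;x^*,t^*)=M\,G_t(x)-M\sum_ix^*_i\partial_iG_t(x)-Mt^*\Delta G_t(x)+\frac M2\sum_{i,j}x^*_ix^*_j\partial_i\partial_jG_t(x)+E(x,t),
\]
with $E$ collecting the remaining, faster-decaying terms of the expansion; since $x^*$ and $t^*$ are fixed constants and $t-\tau t^*\ge t/2$ for $t$ large, the scaling identity gives $\|E(\cdot,t)\|_{L^p}=O(t^{-3/2-\frac n2(1-\frac1p)})$. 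Subtracting the two expansions: the $G_t$-terms cancel; the $\partial_iG_t$-terms cancel by the choice (\ref{eqn:1.6}) of $x^*$; and the second-order terms combine, via (\ref{eqn:a}), into $\sum_{i,j}(\tfrac12\int y_iy_jf-\tfrac M2x^*_ix^*_j)\partial_i\partial_jG_t=c_{\bf{0}}\,\Delta G_t$, which together with the surviving term $+Mt^*\Delta G_t$ produces $(c_{\bf{0}}+Mt^*)\,\Delta G_t=0$ by (\ref{eqn:c}). Hence $u-\Go=R_u-E$, and the proof is reduced to showing $\|R_u(\cdot,t)\|_{L^p}=o(t^{-1-\frac n2(1-\frac1p)})$.

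This last estimate is the step I expect to be the main obstacle, because with only two finite moments one cannot absorb the weight $|y|^3$ produced by a naive third-order bound. I would resolve it by fixing a cut-off $\rho(t)\to\infty$ with $\rho(t)=o(\sqrt t)$ and splitting the $y$-integral in $R_u$ at $|y|=\rho(t)$. On $\{|y|\le\rho(t)\}$ the mean value inequality plus translation invariance of the $L^p$-norm gives $\|\partial_i\partial_jG_t(\cdot-\theta y)-\partial_i\partial_jG_t(\cdot)\|_{L^p}\le\theta|y|\,\|\nabla\partial_i\partial_jG_t\|_{L^p}\le C|y|\,t^{-3/2-\frac n2(1-\frac1p)}$, so this part is at most $C\rho(t)\,t^{-3/2-\frac n2(1-\frac1p)}\int_{\R^n}|y|^2|f|\,dy=o(t^{-1-\frac n2(1-\frac1p)})$, using $\rho(t)=o(\sqrt t)$. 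On $\{|y|>\rho(t)\}$ the crude bound $\|\partial_i\partial_jG_t(\cdot-\theta y)-\partial_i\partial_jG_t(\cdot)\|_{L^p}\le 2\|\partial_i\partial_jG_t\|_{L^p}\le Ct^{-1-\frac n2(1-\frac1p)}$ gives at most $Ct^{-1-\frac n2(1-\frac1p)}\int_{|y|>\rho(t)}|y|^2|f(y)|\,dy$, which is $o(t^{-1-\frac n2(1-\frac1p)})$ since $\int_{\R^n}(1+|y|)^2|f|<\infty$ forces that tail integral to $0$ as $\rho(t)\to\infty$. Adding the two pieces to the bound on $\|E\|_{L^p}$ and multiplying by $t^{1+\frac n2(1-\frac1p)}$ gives (\ref{eqn:1.8}). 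Incidentally, this splitting is also the reason the conclusion is a limit equal to $0$ rather than a quantitative decay rate: the rate is controlled by the speed at which $\int_{|y|>\rho}|y|^2|f|$ vanishes, which the hypotheses do not quantify.
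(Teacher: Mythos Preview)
Your proof is correct and follows the same overall architecture as the paper: Taylor-expand both $u$ and $\Go$ about the unshifted kernel $G_t$, cancel the terms of order $\le 2$ using (\ref{eqn:1.6}), (\ref{eqn:a}), (\ref{eqn:c}), and bound the two remainders. The paper proves Theorem~1.1 as the $k=0$ instance of Theorem~1.5.

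The one substantive difference is in the treatment of $R_u$. Instead of a cut-off at $\rho(t)$, the paper rescales the second-derivative difference to the fixed kernel $G_1$, obtaining
\[
\|R_u(\cdot,t)\|_{L^p}\le C\,t^{-1-\frac n2(1-\frac1p)}\int_0^1\!\!\int_{\R^n}\sum_{|\alpha|=2}\bigl\|\partial^\alpha G_1(\cdot-t^{-1/2}y\theta)-\partial^\alpha G_1(\cdot)\bigr\|_{L^p}\,|y|^2|f(y)|\,dy\,d\theta,
\]
and then invokes dominated convergence twice (the inner $L^p$-norm is bounded by $2\|\partial^\alpha G_1\|_{L^p}$ and tends to $0$ pointwise in $(y,\theta)$). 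Your splitting argument is an equivalent, slightly more hands-on way to get the same $o(t^{-1-\frac n2(1-\frac1p)})$ conclusion; neither method gains over the other here. For the error $E$ from $\Go$, the paper expands in $x^*$ and $t^*$ separately and tracks several cross remainders (the pieces $J_1,\dots,J_4$ in Section~3), but your coarser bound $\|E(\cdot,t)\|_{L^p}=O(t^{-3/2-\frac n2(1-\frac1p)})$, valid once $t-\tau t^*\ge t/2$, already suffices.
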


\bigskip

\begin{remark}
(i) If the $f$ satisfies the condition that
$\int_{\R^n}(1+|x|)^3|f(x)|\,dx<+\infty$ additionally, the error estimate (\ref{eqn:1.8}) can be improved as
\begin{equation}\label{eqn:1.9}
\displaystyle \|u(\cdot,t)-\Go(\cdot;x^*,t^*)\|_{L^p(\R^n)}\leq Ct^{-\frac{3}{2}-\frac{n}{2}(1-\frac{1}{p})}
\end{equation}
for all $t>\max\{t*,\,0\}$. Here $C$ is a constant depending only on $\int_{\R^n}(1+|x|)^3|f(x)|\,dx$. For the proof of the statement (i), see the last part of Proof of Theorem 1.5 in Section 3. 

\medskip

(ii) Let $f$ satisfy the same conditions as in Theorem 1.1. It follows from (\ref{eqn:1.6}) and (\ref{eqn:a}) that
\begin{equation}\label{eqnrem:1.1}\displaystyle
\frac{\int_{\R^n} \, x_i x_j\,f(x)\,dx}{\int_{\R^n}\, f(x)\,dx}=\frac{\int_{\R^n} \,x_i \,f(x)\,dx}{\int_{\R^n}\, f(x)\,dx}\,\cdot\,
\frac{\int_{\R^n} \,x_j \,f(x)\,dx}{\int_{\R^n}\, f(x)\,dx}\quad \mbox{for}\,\,\,i\neq j,\,\,i,j=1,\dots,n.
\end{equation}
On the other hand, since the identity $\int_{\R^n}\, f(x)\,dx\, (x_i^*)^2=\int_{\R^n}\,x_i\,f(x)\,dx \,x^*_i$ holds by (\ref{eqn:1.6}), we see from (\ref{eqn:a}) that
\begin{eqnarray}\label{eqnrem:1.2}\displaystyle
c_{\bf{0}}&=&\frac{1}{\,2\,}\int_{\R^n} \, x_i^2 \,f(x)\,dx-\frac{1}{\,2\,}\int_{\R^n} \,f(x)\,dx\, (x^*_i)^2\nonumber \\
&=&
\frac{1}{\,2\,}\int_{\R^n} \, x_i^2 \,f(x)\,dx-\int_{\R^n} \,x_i\,f(x)\,dx\, x^*_i +\frac{1}{\,2\,}\int_{\R^n} \,f(x)\,dx\, (x^*_i)^2 \\
&=& \frac{1}{\,2\,}\int_{\R^n}\,(x_i-x_i^*)^2\,f(x)\,dx\quad \mbox{for}\,\,\,i=1,\dots,n. \nonumber
\end{eqnarray}
Then, using the notation that $\displaystyle <h(x)>_f=\frac{\int_{\R^n} h(x)f(x)\,dx}{\int_{\R^n} f(x)\,dx}$ for a function $h(x)$ defined on $\R^n$, we can see that the conditions (\ref{eqnrem:1.1}), (\ref{eqnrem:1.2}) can be rewritten as
\begin{eqnarray*}
<x_i \,x_j>_f&=&<x_i>_f<x_j>_f \quad \mbox{for}\,\,i\neq j, \,\,i,j=1,\dots,n,\\
\displaystyle \frac{1}{2}<(x_i-x^*_i)^2>_f&=&<c_{\bf{0}}>_f\quad \mbox{for}\,\, i=1,\dots,n.\\
\end{eqnarray*}
Hence, we might say that the conditions (\ref{eqnrem:1.1}) and (\ref{eqnrem:1.2}) imply ``the independence of each component of displacement'' and ``the equi-variance of all components of displacement'' with respect to the mean value with the weight $f$, respectively.

\medskip

(iii) We can see from (\ref{eqn:a}) with $i=j$ and (\ref{eqnrem:1.2}) that the time shift $t^*$ defined by (\ref{eqn:c}) obeys the 
equality
\begin{equation}\label{eqn:1.7}
2n t^* \int_{\R^n} f(x)\,dx =-\sum^n_{i=1} \int_{\R^n} (x_i-x^*_i)^2 f(x)\,dy.
\end{equation}

\medskip

(iv) Even when the additional condition (\ref{eqn:a}) does not hold, by taking just the spatial shift $x^*$ as in (\ref{eqn:1.6}) and the time shift as $t^*=0$, the modified heat kernel $\Go(x;x^*,0)$ gives a better long-time asymptotic profile than the original heat kernel: if
$f\in L^1(\R^n)$, $\int_{\R^n}(1+|x|)|f(x)|\,dx<\infty$ and $\int_{\R^n} f(x)\,dx\neq 0$, then the estimate
\begin{equation}\label{eqn:1.8}
\displaystyle \lim_{t\rightarrow\infty} t^{\frac{1}{2}+\frac{n}{2}(1-\frac{1}{p})}\|u(\cdot,t)-\Go(\cdot;x^*,0)\|_{L^p(\R^n)}=0
\end{equation}
holds.

\medskip

(v) There exist nontrivial functions satisfying the condition (\ref{eqn:a}). In fact, one of such examples is given in the form
\begin{align*}
f(x)=\prod_{i=1}^n \, a_i(x_i),
\end{align*}
where
$$
a_i(x)=(c_{i,0}+c_{i,1}\,x) \, e^{-x^2}, \qquad c_{i,0}\in \R\setminus\{0\} ,\,\,c_{i,1} \in \R \; , \; i=1, \dots , n.
$$
We find that $f\in L^1(\R^n)$ and $\int_{\R^n}\,(1+|x|)^2\,f(x)\,dx<\infty$ and $\int_{\R^n}\,f(x)\,dx\neq 0$.
If $c_{i,0}$, $c_{i,1}$ are taken as $\dis \frac{c_{i,1}}{c_{i,0}} =c$ for some constant $c \in \R$, $i=1,\dots,n,\,$ the condition (\ref{eqn:a}) holds and the spatial and time shifts are given explicitly by
\begin{align*}
x_i^*=\frac{c}{\,2\,}\,\,\mbox{for} \;\; i=1,\dots n, \quad t^*=-\frac{1}{\;4\;} \, \left(1-\frac{c^2}{\,2\,} \right).
\end{align*}
See Appendix D for more details.
\end{remark}

\medskip

When we integrate $u$ over the unit sphere in $\R^n$, we need not take the spatial shift
$x^*$ of $\Go(x)$ as in (\ref{eqn:1.6}) and gain one half decay order in the error estimate than that of Theorem 1.1.
\begin{thm} Let $f\in L^1(\R^n)$ and $\int_{\R^n}(1+|x|)^3|f(x)|\,dx<\infty$.
Assume that
$$
\int_{\R^n} f(x)\,dx\neq 0.
$$
Additionally, the initial data $f$ is supposed to satisfy the conditions (\ref{eqn:a}). We take the time shift $t^*$ as in 
(\ref{eqn:c}) and put the spatial shift $x^*=0$ of $\Go(x)$.

Then the solution $u=u(x,t)$ to (\ref{eqn:1.1}), (\ref{eqn:1.2}) given by (\ref{eqn:1.3}) and $\Go(x;0,t^*)$ obey the estimate for $1\leq p \leq \infty$ such that
\begin{equation}\label{eqn:1.10}
\displaystyle \lim_{t\rightarrow\infty} t^{\frac{3}{2}+\frac{n}{2}(1-\frac{1}{p})}\|[u(\cdot,t)]-\Go(\cdot;0,t^*)\,|S^{n-1}|\,\|_{L^p(\R^n)}=0.
\end{equation}
Here $[u]$ denotes the integration of $u$ over the unit sphere $S^{n-1}$ in $\R^n$, that is,
$$
[u]=\int_{S^{n-1}} u\,d\omega,
$$
and
$|S^{n-1}|$ denotes the measure of $S^{n-1}$ which is given by
$$\displaystyle
|S^{n-1}|=\frac{2\pi^{\frac{n}{2}}}{\,\Gamma(\frac{n}{2})\,}.
$$
\end{thm}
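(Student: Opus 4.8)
The plan is to run the proof of Theorem 1.1 once more, but to perform the spherical average $[\,\cdot\,]$ already at the level of Taylor's formula, where the extra half decay order becomes transparent. First I would insert (\ref{eqn:1.3}) and expand $G_t(x-y)$ in $y$ about $y=0$ (rather than about $y=x^*$) to second order with integral remainder, $G_t(x-y)=G_t(x)-\sum_i y_i\,\partial_i G_t(x)+\tfrac12\sum_{i,j}y_iy_j\,\partial_i\partial_j G_t(x)+R(x,y,t)$, where $R$ is a finite sum of terms $y^\alpha\int_0^1(1-s)^2(\partial^\alpha G_t)(x-sy)\,ds$ with $|\alpha|=3$. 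Integrating against $f$ gives $u=m_0\,G_t-\sum_i m_i\,\partial_i G_t+\tfrac12\sum_{i,j}m_{ij}\,\partial_i\partial_j G_t+R^u$, with $m_0=\int_{\R^n} f\,dx$, $m_i=\int_{\R^n} x_i f\,dx$, $m_{ij}=\int_{\R^n} x_ix_j f\,dx$ and $R^u(x,t)=\int_{\R^n} R(x,y,t)f(y)\,dy$.

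The decisive point is that $G_t$ is even, so $\partial^\alpha G_t$ is odd whenever $|\alpha|$ is odd, whence $\int_{S^{n-1}}(\partial^\alpha G_t)(r\omega)\,d\omega=0$ for such $\alpha$ by the antipodal symmetry of $S^{n-1}$. Applying $[\,\cdot\,]$ therefore annihilates the entire first-order term $-\sum_i m_i\,\partial_i G_t$ — the very term that in Theorem 1.1 had to be removed by choosing the spatial shift (\ref{eqn:1.6}), and whose automatic disappearance here is the origin of the gain. Likewise the mixed second derivatives drop ($\int_{S^{n-1}}(\partial_i\partial_j G_t)(r\omega)\,d\omega=0$ for $i\neq j$), while the rotational invariance of $S^{n-1}$ together with $\sum_i\partial_i^2=\Delta=\partial_t$ collapses the diagonal part to a multiple of $\partial_t[G_t](r)$; one is left with $[u(\cdot,t)](r)=|S^{n-1}|\bigl(m_0\,G_t(r)+\tfrac{1}{2n}\bigl(\int_{\R^n}|x|^2 f\,dx\bigr)\partial_t G_t(r)\bigr)+[R^u(\cdot,t)](r)$, where $G_t(r)=(4\pi t)^{-n/2}e^{-r^2/4t}$. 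On the other side $\Go(\cdot;0,t^*)$ of (\ref{eqn:1.5}) is radial, so $[\Go(\cdot;0,t^*)]=|S^{n-1}|\,\Go(\cdot;0,t^*)=m_0|S^{n-1}|\,G_{t-t^*}$, and expanding $G_{t-t^*}$ in the time shift about $0$ (using $\partial_t^k G=\Delta^k G$) gives $G_{t-t^*}(r)=G_t(r)-t^*\partial_t G_t(r)+\tfrac{(t^*)^2}{2}\partial_t^2 G_\theta(r)$ for some $\theta$ between $t-t^*$ and $t$. Now the hypotheses (\ref{eqn:a}) and (\ref{eqn:c}) are precisely what make the coefficients of $\partial_t G_t(r)$ in $[u(\cdot,t)]$ and in $[\Go(\cdot;0,t^*)]$ coincide (both equal $|S^{n-1}|\,c_{\mathbf 0}$), so all contributions of order $\le 2$ cancel and $[u(\cdot,t)]-[\Go(\cdot;0,t^*)]=[R^u(\cdot,t)]-m_0|S^{n-1}|\tfrac{(t^*)^2}{2}\partial_t^2 G_\theta$.

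It remains to estimate the two remainders in $L^p(\R^n)$, for which I would use the Gaussian bound $\|\partial^\alpha G_t\|_{L^p(\R^n)}\le C\,t^{-|\alpha|/2-\frac n2(1-\frac1p)}$ and the elementary inequality $\|[g]\|_{L^p(\R^n)}\le|S^{n-1}|\,\|g\|_{L^p(\R^n)}$ (Hölder on $S^{n-1}$). For $t$ so large that $t-t^*>t/2$, the $\Go$-remainder is $O(t^{-2-\frac n2(1-\frac1p)})$, hence negligible against the target rate. The genuine obstacle is $[R^u]$: the crude estimate gives only $\|R^u(\cdot,t)\|_{L^p}\le C\bigl(\int_{\R^n}|x|^3|f|\,dx\bigr)t^{-3/2-\frac n2(1-\frac1p)}$, which merely matches, and does not beat, the order in (\ref{eqn:1.10}). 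To upgrade it to $o(\cdot)$ I would split the $y$-integral at $|y|=M$: the piece $|y|>M$ contributes $\le C\bigl(\int_{|y|>M}|y|^3|f(y)|\,dy\bigr)t^{-3/2-\frac n2(1-\frac1p)}$, whose prefactor tends to $0$ as $M\to\infty$; on $|y|\le M$ one expands $G_t(x-y)$ one order further, to third order, whereupon the new third-order terms are of the form $(\partial^\alpha G_t)(x)\,\bigl(\int_{|y|\le M}y^\alpha f\,dy\bigr)$ with $|\alpha|=3$ and so vanish after $[\,\cdot\,]$ by the same oddness, while the fourth-order remainder is $\le C(M)\,t^{-2-\frac n2(1-\frac1p)}$. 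A standard $\varepsilon$-$M$ argument then yields $\lim_{t\to\infty}t^{3/2+\frac n2(1-\frac1p)}\|[R^u(\cdot,t)]\|_{L^p(\R^n)}=0$, and assembling the pieces gives (\ref{eqn:1.10}) (the endpoint $p=\infty$ being entirely analogous). The decisive step is thus this last one — converting the order-matching bound on the third-order remainder into a strict $o$-statement — where both the assumption $\int_{\R^n}(1+|x|)^3|f|\,dx<\infty$ and, once more, the vanishing of the spherical averages of the odd-order derivatives of $G_t$ are genuinely used.
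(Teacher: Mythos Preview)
Your overall strategy coincides with the paper's: Taylor-expand $G_t(x-y)$ in $y$, exploit the vanishing of $\int_{S^{n-1}}\partial^\alpha G_t(r\omega)\,d\omega$ for $|\alpha|$ odd (this is the content of the paper's Lemmas~3.2--3.3) to kill the first- and third-order contributions after the spherical average, and then estimate the remaining error. Your $\varepsilon$--$M$ splitting for upgrading the third-order remainder from $O(t^{-3/2-\cdots})$ to $o(t^{-3/2-\cdots})$ is a sound alternative to the paper's device, which instead writes the Taylor remainder in the difference form $\int_0^1(1-\theta)^2\bigl[(\partial^\alpha G_t)(x-y\theta)-(\partial^\alpha G_t)(x)\bigr]\,d\theta$ and invokes continuity of translation in $L^p$ after the self-similar rescaling $z=x/\sqrt t$.

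There is, however, a gap at the second-order step. You assert that (\ref{eqn:a}) and (\ref{eqn:c}) force the coefficients of $\partial_t G_t(r)$ in $[u]$ and in $|S^{n-1}|\,\Go(\cdot;0,t^*)$ to coincide, both equal to $|S^{n-1}|c_{\mathbf 0}$. The $\Go$-side is indeed $-m_0t^*|S^{n-1}|=c_{\mathbf 0}|S^{n-1}|$ by (\ref{eqn:c}). But on the $[u]$-side your own computation gives the coefficient $|S^{n-1}|\cdot\tfrac{1}{2n}\int_{\R^n}|x|^2f\,dx$, and summing (\ref{eqn:a}) over $i=j$ (where, crucially, the $x^*$ in (\ref{eqn:a}) is the one defined by (\ref{eqn:1.6}), \emph{not} zero) yields
\[
nc_{\mathbf 0}=\tfrac12\int_{\R^n}|x|^2f\,dx-\tfrac12\,m_0\,|x^*|^2.
\]
Hence the two coefficients differ by $\tfrac{1}{2n}m_0|x^*|^2$, and your cancellation fails unless all first moments $\int_{\R^n} x_if\,dx$ vanish. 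The paper does not handle the second-order term by spherical averaging at all: it keeps the nontrivial spatial shift $x^*$ of (\ref{eqn:1.6}) in the expansion of $\Go$ so that $I_2$ of (\ref{eqn:2.8}) vanishes \emph{pointwise} via (\ref{eqn:a}) and (\ref{eqn:c}), and only at the very end specialises to $x^*=0$---a final step which, it must be said, is itself not transparently justified there and appears to carry the same difficulty.
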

\begin{remark}
(i) We notice that if we take $x^*=0$ in $\Go$, then $\Go$ depends only on $r$ with $r=|x|$, so that
$$
[\Go(x;0,t^*)]=\Go(x;0,t^*)|S^{n-1}|.
$$

\medskip

(ii) If the initial data $f\in L^1(\R^n)$ is radially symmetric, the solution $u$ to (\ref{eqn:1.1}), (\ref{eqn:1.2}) shall be radially symmetric. Furthermore, if the radially symmetric initial data $f\in L^1(\R^n)$ satisfies that
$\int_{\R^n} (1+|x|)^3 |f(x)| dx<\infty$ , it holds that
$\int_{\R^n} x_i f(x) dx=0, \int_{\R^n} x_i x_j f(x) dx=c_f\delta_{ij}$ for a specific constant 
$c_f\in\R, \, \mbox{for\,all}\,\, i,j=1,\dots,n$.
Hence, the condition (\ref{eqn:a}) always holds for $\displaystyle c_0=\frac{c_f}{2}$. 
Therefore, for each radially symmetric initial data $f\in L^1(\R^n)$ such that 
$\int_{\R^n} (1+|x|)^3 |f(x)| dx<\infty, \,\int_{\R^n} f(x) dx\neq 0$,
the error estimate (\ref{eqn:1.10}) for $[u]$ turns out to be
\begin{equation*}
\displaystyle \lim_{t\rightarrow\infty} t^{\frac{3}{2}+\frac{n}{2}(1-\frac{1}{p})}\|u(\cdot,t)
-\Go(\cdot;0,t^*)\,\|_{L^p(\R^n)}=0,
\end{equation*}
where $\displaystyle t^*=-\frac{c_f}{2\,\int_{\R^n} f(x) dx\,}$.
\end{remark}

\medskip

On the other hand, in $\it{the \,degenerate\, case}$ that $\int_{\R^n} f(x)\,dx= 0$, it is clear that both the spatial and time shifts $x^*$ and $t^*$ of $\Go$ cannot be determined by the equalities (\ref{eqn:1.6}) and (\ref{eqn:c}). To overcome this difficulty, we now introduce {\it the k-th order modified heat kernel} with $k\in\N$.
In what follows, the initial data $f$ is assumed to satisfy that $f\in L^1(\R^n)$ and $\int_{\R^n}(1+|x|)^{k+2}|f(x)|\,dx<+\infty$,
unless otherwise noticed. For $\ell \in\N_0$, set 
$$
\Lambda_\ell(f)=\{\,\alpha\in(\N_0)^n\, \,|\, \,\M_\alpha(f)\neq 0 \, \mbox{with} \, |\alpha|=\ell \,\},
$$
where
$\M_\alpha(f)$ is the $\alpha$-moment of the initial data $f$ defined by
$$
\M_\alpha(f)=\int_{\R^n} x^\alpha f(x)\,dx.
$$
For $k\in \N$, {\it the k-th order modified heat kernel} $\Gk$ is defined by
\begin{eqnarray}\label{eqn:1.12}
\Gk(x)&=&\Gk(x; x^*,t^*) \nonumber \\
\displaystyle &=&\sum_{0\leq |\alpha|\leq k-1} \frac{\,(-1)^{|\alpha|\,}}{\alpha!} \M_\alpha(f)\partial_x^\alpha G_t(x)+
\sum_{\alpha \in \Lambda_k(f)}\frac{\,(-1)^{k\,}}{\alpha!} \M_\alpha(f)\partial_x^\alpha G_{t-t^{*,\alpha}}(x-x^{*,\alpha})\\
&\phantom{}&\qquad \qquad \qquad \qquad \qquad \qquad \qquad \qquad
\mbox{for}\,\,t>\max\,\{\,t^{*,\alpha}, 0\,\,|\, \alpha\in \Lambda_k(f)\}, \,\,x\in\R^n, \nonumber
\end{eqnarray}
where $x^*=\{\,x^{*,\alpha}\,|\,\alpha\in\Lambda_k(f)\,\}$ and $t^*=\{\,t^{*,\alpha}\,|\, \alpha\in\Lambda_k(f)\,\}$ denote the
total spatial and time shifts of $\Gk(x)$, respectively. More concretely,
$x^{*,\alpha}=(x^{*,\alpha}_1,\dots,x^{*,\alpha}_n)$ and $t^{*,\alpha}$ denote the spatial and time shifts of
$\partial_x^\alpha G_t(x)$ for $\alpha\in \Lambda_k(f)$. 
For all $\alpha\in\Lambda_k(f)$, we take spatial shifts $x^{*,\alpha}$ in (\ref{eqn:1.12}) as
\begin{equation}\label{eqn:1.13}\displaystyle
x^{*,\alpha}_i=\frac{\M_{\alpha+e_i}(f)}{\,\,(k+1)\M_\alpha(f)\,\,}, \,\,\,i=1,\dots,n,
\end{equation}
where $e_i$ is the i-th unit vector in $\R^n$.
Meanwhile, to define the time shifts $t^{*,\alpha}$ for all $\alpha\in\Lambda_k(f)$, we need to assume that the k-th, (k+1)-th and (k+2)-th order moments of the initial data $f$ satisfy the following

\medskip

$\bf{Condition \, A.}$
For all $\alpha\in\Lambda_k(f)$, there exist constants $c_\alpha\in\R$ such that

\begin{equation}\label{eqn:1.a}
\displaystyle
\frac{1}{\,(k+1)(k+2)\,} \M_{\alpha+e_i+e_j}(f)-\frac{1}{\,2!\,}\M_\alpha(f)(x^{*,\alpha})^{e_i+e_j}=c_\alpha \delta_{ij}\,\,\,
\mbox{for\,all}\,\,i,j=1,\dots,n.
\end{equation}
Here, $c_\alpha$ are constants dependent only on $\alpha$ and $f$, while $x^{*,\alpha}$ are the spatial shifts given 
by (\ref{eqn:1.13}).

Furthermore, for all $\alpha\notin\Lambda_k(f)$ with $|\alpha|=k$, the moments of the initial data $f$ satisfy that
\begin{equation}\label{eqn:1.c}
\displaystyle
\M_{\alpha+e_i}(f)=\M_{\alpha+\alpha'}(f)=0\,\,\,
\mbox{for\,all}\,\,i=1,\dots,n,\,\,\mbox{and\, all }\,\alpha'\in (\N_0)^n\,\mbox{with}\,|\alpha'|=2.
\end{equation}

\medskip

Our main theorem of this paper reads
\begin{thm} Let $k\geq 1$ be an integer. Let $f\in L^1(\R^n)$ and $\int_{\R^n}(1+|x|)^{k+2}|f(x)|\,dx<\infty$.
Assume that
$$
\Lambda_k(f)\neq\phi.
$$
We take the spatial shifts $x^{*,\alpha}$, $\alpha \in \Lambda_k(f)$, of $\Gk(x)$ in (\ref{eqn:1.12}) as in (\ref{eqn:1.13}).
Additionally, the initial data $f$ is supposed to fulfill (\ref{eqn:1.a}), (\ref{eqn:1.c}) in Condition A, and the time shifts
$t^{*,\alpha}$, $\alpha\in \Lambda_k(f)$, of $\Gk(x)$ are taken to satisfy that
\begin{equation}\label{eqn:1.b}\displaystyle
c_\alpha+t^{*,\alpha}\M_\alpha(f)=0,
\end{equation}
where $c_\alpha$ are the same constants as in (\ref{eqn:1.a}).

Then the solution $u=u(x,t)$ to (\ref{eqn:1.1}), (\ref{eqn:1.2}) given by (\ref{eqn:1.3}) and $\Gk(x;x^*,t^*)$ obey the estimate for 
$1\leq p \leq \infty$ such that
\begin{equation}\label{eqn:1.16}
\displaystyle \lim_{t\rightarrow\infty} t^{\frac{k+2}{2}+\frac{n}{2}(1-\frac{1}{p})}\|u(\cdot,t)-\Gk(\cdot;x^*,t^*)\|_{L^p(\R^n)}=0.
\end{equation}
\end{thm}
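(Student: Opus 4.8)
The plan is to compare both $u(\cdot,t)$ and $\Gk(\cdot;x^*,t^*)$ against the common truncated moment expansion
$$
E(x,t):=\sum_{0\le|\alpha|\le k+2}\frac{(-1)^{|\alpha|}}{\alpha!}\,\M_\alpha(f)\,\partial_x^\alpha G_t(x),
$$
and to establish the two one-sided estimates
$$
\|u(\cdot,t)-E(\cdot,t)\|_{L^p(\R^n)}=o\bigl(t^{-\frac{k+2}{2}-\frac{n}{2}(1-\frac{1}{p})}\bigr),\qquad \|\Gk(\cdot;x^*,t^*)-E(\cdot,t)\|_{L^p(\R^n)}=O\bigl(t^{-\frac{k+3}{2}-\frac{n}{2}(1-\frac{1}{p})}\bigr)
$$
as $t\to\infty$; the conclusion (\ref{eqn:1.16}) then follows by the triangle inequality. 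Throughout I will use the scaling estimate $\|\partial_x^\beta G_t\|_{L^p(\R^n)}\le C_{\beta,p}\,t^{-\frac{|\beta|}{2}-\frac{n}{2}(1-\frac{1}{p})}$ and the fact that $G_t$ solves the heat equation, i.e.\ $\partial_t\partial_x^\alpha G_t=\Delta\partial_x^\alpha G_t$.

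For the first estimate, write $u(x,t)-E(x,t)=\int_{\R^n}\bigl(G_t(x-y)-\sum_{0\le|\alpha|\le k+2}\frac{(-y)^\alpha}{\alpha!}\partial_x^\alpha G_t(x)\bigr)f(y)\,dy$ and apply Taylor's formula to $y\mapsto G_t(x-y)$ only up to order $k+1$ with the integral form of the remainder. Since $(k+2)\int_0^1(1-s)^{k+1}\,ds=1$, the integrand then equals $(k+2)\sum_{|\alpha|=k+2}\frac{(-y)^\alpha}{\alpha!}\int_0^1(1-s)^{k+1}\bigl(\partial_x^\alpha G_t(x-sy)-\partial_x^\alpha G_t(x)\bigr)\,ds$, and the bound $\|\partial_x^\alpha G_t(\cdot-sy)-\partial_x^\alpha G_t\|_{L^p}\le C\,t^{-\frac{k+2}{2}-\frac{n}{2}(1-\frac{1}{p})}\min\{1,|y|t^{-1/2}\}$ together with Minkowski's integral inequality gives $\|u(\cdot,t)-E(\cdot,t)\|_{L^p}\le C\,t^{-\frac{k+2}{2}-\frac{n}{2}(1-\frac{1}{p})}\int_{\R^n}|y|^{k+2}\min\{1,|y|t^{-1/2}\}|f(y)|\,dy$. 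The last integral tends to $0$ as $t\to\infty$ by dominated convergence, since its integrand is dominated by $|y|^{k+2}|f(y)|\in L^1(\R^n)$ and vanishes pointwise; this is where the hypothesis $\int_{\R^n}(1+|y|)^{k+2}|f|\,dy<\infty$ is used. (If moreover $\int_{\R^n}(1+|y|)^{k+3}|f|\,dy<\infty$, the same argument kept at the level of the plain integral remainder yields the explicit rate $O(t^{-\frac{k+3}{2}-\frac{n}{2}(1-\frac{1}{p})})$, which is the source of the improvement stated in the remark following Theorem~1.1.)

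For the second estimate, the part $\sum_{0\le|\alpha|\le k-1}\frac{(-1)^{|\alpha|}}{\alpha!}\M_\alpha(f)\partial_x^\alpha G_t$ of $\Gk$ already coincides with the corresponding part of $E$, so only the sum over $\Lambda_k(f)$ in (\ref{eqn:1.12}) needs attention. For each $\alpha\in\Lambda_k(f)$ I Taylor-expand the building block $\partial_x^\alpha G_{t-\tau}(x-s)$ about $(s,\tau)=(0,0)$ in the $n+1$ shift variables, using $\partial_t G_t=\Delta G_t$ to rewrite $\tau$-derivatives through powers of $\Delta$:
$$
\partial_x^\alpha G_{t-t^{*,\alpha}}(x-x^{*,\alpha})=\partial_x^\alpha G_t-\sum_{i}x_i^{*,\alpha}\,\partial_x^{\alpha+e_i}G_t-t^{*,\alpha}\,\Delta\partial_x^\alpha G_t+\tfrac{1}{2}\sum_{i,j}x_i^{*,\alpha}x_j^{*,\alpha}\,\partial_x^{\alpha+e_i+e_j}G_t+r_\alpha(x,t),
$$
where, provided $t$ is large enough that $t-\theta\,t^{*,\alpha}\ge t/2$ for all $\theta\in[0,1]$ and all $\alpha\in\Lambda_k(f)$, the scaling estimate shows that the $s\tau$- and $\tau^2$-monomials and the whole third-order Taylor remainder collected in $r_\alpha$ contribute $O(t^{-\frac{k+3}{2}-\frac{n}{2}(1-\frac{1}{p})})$ in $L^p$. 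It then remains to verify that $\sum_{\alpha\in\Lambda_k(f)}\frac{(-1)^k}{\alpha!}\M_\alpha(f)$ times the first four displayed terms equals $\sum_{k\le|\beta|\le k+2}\frac{(-1)^{|\beta|}}{\beta!}\M_\beta(f)\partial_x^\beta G_t$. The order-$k$ part is immediate, since $\M_\alpha(f)=0$ for $|\alpha|=k$, $\alpha\notin\Lambda_k(f)$. For the order-$(k+1)$ part I substitute the spatial shifts (\ref{eqn:1.13}); then $\M_\alpha(f)$ cancels, (\ref{eqn:1.c}) shows the resulting coefficient of $\partial_x^{\alpha+e_i}G_t$ vanishes whenever $\alpha\notin\Lambda_k(f)$ so the sum may be taken over all $|\alpha|=k$, and re-indexing $\beta=\alpha+e_i$ via $1/(\beta-e_i)!=\beta_i/\beta!$ and $\sum_i\beta_i=k+1$ reproduces $\sum_{|\beta|=k+1}\frac{(-1)^{k+1}}{\beta!}\M_\beta(f)\partial_x^\beta G_t$. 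For the order-$(k+2)$ part I merge $-t^{*,\alpha}\Delta\partial_x^\alpha G_t$ and $\tfrac{1}{2}\sum_{i,j}x_i^{*,\alpha}x_j^{*,\alpha}\partial_x^{\alpha+e_i+e_j}G_t$ into $\sum_{i,j}\bigl(\tfrac{1}{2}x_i^{*,\alpha}x_j^{*,\alpha}-t^{*,\alpha}\delta_{ij}\bigr)\partial_x^{\alpha+e_i+e_j}G_t$; by (\ref{eqn:1.b}), $-t^{*,\alpha}\delta_{ij}=\frac{c_\alpha}{\M_\alpha(f)}\delta_{ij}$, and (\ref{eqn:1.a}) then turns the bracket into exactly $\frac{1}{(k+1)(k+2)}\,\M_{\alpha+e_i+e_j}(f)/\M_\alpha(f)$; once again $\M_\alpha(f)$ cancels, (\ref{eqn:1.c}) lets one extend the sum to all $|\alpha|=k$, and re-indexing $\beta=\alpha+e_i+e_j$ with the combinatorial identity $\sum_{i,j:\,\gamma+e_i+e_j=\beta}1/\gamma!=(k+1)(k+2)/\beta!$ produces $\sum_{|\beta|=k+2}\frac{(-1)^{k+2}}{\beta!}\M_\beta(f)\partial_x^\beta G_t$. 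This gives $\Gk(\cdot;x^*,t^*)=E(\cdot,t)+O(t^{-\frac{k+3}{2}-\frac{n}{2}(1-\frac{1}{p})})$ in $L^p$, which is the claimed second estimate.

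I expect the scaling bounds for $\partial_x^\beta G_t$ and the remainder estimates to be routine; the delicate point is the order-$(k+2)$ matching in the third step, where the precise shape of Condition A --- in particular the factor $(k+1)(k+2)$ and the term $\tfrac{1}{2}\M_\alpha(f)(x^{*,\alpha})^{e_i+e_j}$ in (\ref{eqn:1.a}) --- together with the choice (\ref{eqn:1.b}) of the time shifts is exactly what forces the first-order-in-time contribution $-t^{*,\alpha}\Delta\partial_x^\alpha G_t$ and the second-order-in-space contribution of the shifted building blocks to collapse onto the genuine $(k+2)$-nd moment terms of $E$. The off-diagonal vanishing encoded in (\ref{eqn:1.c}) is in turn what makes the two re-indexings over $\Lambda_k(f)$ exact rather than approximate; keeping the threshold $t>\max\{t^{*,\alpha},0\}$ and the constants in $r_\alpha$ under control is a minor nuisance that is harmless as $t\to\infty$.
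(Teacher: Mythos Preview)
Your proposal is correct and follows essentially the same strategy as the paper: Taylor-expand the solution $u$ to order $k+2$, Taylor-expand the shifted building blocks of $\Gk$ in the shift variables, and use Condition~A together with the choices (\ref{eqn:1.13}), (\ref{eqn:1.b}) to kill the terms of order $k$, $k+1$, $k+2$, leaving only remainders estimated via Minkowski, the scaling of $\partial_x^\beta G_t$, and dominated convergence.

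The only organizational difference is that you pivot through the explicit truncated expansion $E(\cdot,t)$ and verify the identity $\Gk=E+O(t^{-(k+3)/2-\frac{n}{2}(1-\frac1p)})$ via the clean re-indexings $\sum_i\beta_i=k+1$ and $\sum_{i,j}1/(\beta-e_i-e_j)!=(k+1)(k+2)/\beta!$, whereas the paper subtracts directly and writes the discrepancy as $I_1+I_2+I_3$ (equations (\ref{eqn:2.7})--(\ref{eqn:2.9})), checking each vanishes under Condition~A. These are the same cancellations read in two directions; your combinatorial bookkeeping is arguably tidier, while the paper's formulation makes it more transparent why (\ref{eqn:1.a}) and (\ref{eqn:1.c}) are exactly the conditions needed. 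The remainder analysis (your $r_\alpha$ versus the paper's $\hat R_k=\sum_\alpha\frac{(-1)^k}{\alpha!}\M_\alpha(f)\tilde R_\alpha$, and your $\min\{1,|y|t^{-1/2}\}$ bound versus the paper's $\varphi_t^{(k+2)}$) are equivalent.
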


\medskip

\begin{remark}
(i) As in Remark 1.2 (iv), we have the following statement. If $f\in L^1(\R^n)$, $\int_{\R^n} (1+|x|)^{k+1} |f(x)|dx<\infty$ and $\Lambda_k(f)\neq\phi$, then the $k$-th order modified heat kernel $\Gk(x)$ with the spatial shifts $x^{*,\alpha}$ given by (\ref{eqn:1.13}) and the time shifts $t^{*,\alpha}=0$, $\alpha\in\Lambda_k(f)$, obeys the error estimate
\begin{equation}\label{eqn:1.17}
\displaystyle \lim_{t\rightarrow\infty} t^{\frac{k+1}{2}+\frac{n}{2}(1-\frac{1}{p})}\|u(\cdot,t)-\Gk(\cdot;x^*,0)\|_{L^p(\R^n)}=0.
\end{equation}

\medskip

(ii) We can see from (\ref{eqn:1.13}) and (\ref{eqn:1.a}) with $i=j$ that the time shift $t^{*,\alpha}$ for an 
$\alpha\in\Lambda_k(f)$ defined by (\ref{eqn:1.b}) satisfies the equality such that
\begin{equation}\label{eqn:1.d}
n(k+1)(k+2)\M_\alpha(f) t^{*,\alpha} =-\sum^n_{i=1} \int_{\R^n} (x_i-s\, x^{*,\alpha}_i)^2 \,x^\alpha f(x)\,dy,
\end{equation}
where
$$\displaystyle
s=\frac{\,2(k+1)+\sqrt{2k(k+1)}\,}{2}.
$$
The equality (\ref{eqn:1.7}) in Remark 1.2 is obtained by putting $k=0$ in (\ref{eqn:1.d}).
See Appendix B for the proof of (\ref{eqn:1.d}).

\medskip

(iii) There exists a nontrivial function $f$ that satisfies Condition A, for example, given in the form such that
\begin{align}\label{eq:2.15}
f(x)=\prod_{i=1}^n \, a_i(x_i),
\end{align}
where
$$
a_i(x)=\sum_{j=0}^{k+1}(c_{i,j}\,x^{j}) \, e^{-x^2}, \quad c_{i,j} \in \R,\,i=1,\dots,n,\,j=0,1,\dots,k+1.
$$
In particular, when $k=1$, taking $\dis (c_{1,0}\,,c_{1,1}\,,c_{1,2})=(\mathcal{C}_1 , \pm\sqrt{2}\,\mathcal{C}_1,-2\,\mathcal{C}_1)$
and 
$\dis(c_{i,0}\,,c_{i,1}\,,c_{i,2})=(\mathcal{C}_i ,0,-\frac{2}{3}\,\mathcal{C}_i)$, $i=2,\dots,n$, for arbitrary constants
$\mathcal{C}_i\in\R\setminus\{0\}$, $i=1,2,\dots,n$, we can show that the function
$f$ in (\ref{eq:2.15}) satisfies Condition A, and the spatial and time shifts $x^{*,\alpha}$ and $t^{*,\alpha}$, 
$\alpha\in\Lambda_1(f)$, are given by
$$ \Lambda_{1}(f)=\{e_1\}, \,\, x_1^{*,e_1}=\mp\sqrt{2},\,\,x_i^{*,e_1}=0\,\,
\mbox{for}\,\,i=2,\dots,n,\,\,t^{*,e_1}=0.
$$
See Appendix E for more details.

\end{remark}

\bigskip

Let $C_c(\R^n)$ be the space of all compactly supported continuous functions on $\R^n$.

Then, Stone-Weierstrass theorem readily leads to
\begin{proposition}
Let $f\in C_c(\R^n)$. If $\mathcal{M}_\alpha(f)=0$ for all $\alpha\in (\N_0)^n$, then $f\equiv 0$.
\end{proposition}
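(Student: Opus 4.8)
The plan is to deduce this from the classical Stone--Weierstrass theorem applied to the one-point compactification, or more directly from the density of polynomials together with a compact support argument. Suppose $f\in C_c(\R^n)$ has $\M_\alpha(f)=\int_{\R^n} x^\alpha f(x)\,dx=0$ for every multi-index $\alpha\in(\N_0)^n$. Let $K=\mathrm{supp}\,f$, a compact set, and fix a closed ball $B$ (or cube) containing $K$ in its interior.

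First I would observe that the vanishing of all moments means $\int_{\R^n} P(x) f(x)\,dx=0$ for every polynomial $P$, by linearity. Next, given any $\varepsilon>0$, the Stone--Weierstrass theorem provides a polynomial $P$ with $\sup_{x\in B}|\,\overline{f(x)}-P(x)\,|<\varepsilon$; here if $f$ is complex-valued one applies the theorem to the real and imaginary parts separately, and if $f$ is real-valued one simply approximates $f$ itself. Then
\begin{equation*}
\int_{\R^n}|f(x)|^2\,dx=\int_{B}|f(x)|^2\,dx=\int_B f(x)\bigl(\overline{f(x)}-P(x)\bigr)\,dx+\int_B f(x)P(x)\,dx.
\end{equation*}
The second integral equals $\int_{\R^n} f(x)P(x)\,dx=0$ since $f$ vanishes outside $B$ and all moments vanish, while the first is bounded by $\varepsilon\,\|f\|_{L^1}$. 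Letting $\varepsilon\to 0$ gives $\int_{\R^n}|f|^2\,dx=0$, hence $f\equiv 0$.

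The only point requiring a little care is the passage from "$\M_\alpha(f)=0$ for all $\alpha$" to "$\int f\cdot P=0$ for all polynomials $P$" (immediate by linearity), and the fact that polynomial approximation need only be performed on the fixed compact set $B$ rather than on all of $\R^n$ — this is exactly why compact support is used. There is no serious obstacle here; the main thing is to invoke Stone--Weierstrass on a compact set containing the support and to keep track of the error term $\varepsilon\|f\|_{L^1(\R^n)}$, which tends to $0$. An alternative but essentially equivalent route is to note that $\widehat{f}$ is real-analytic (even entire, being the Fourier transform of a compactly supported $L^1$ function), and its Taylor coefficients at the origin are, up to constants, the moments $\M_\alpha(f)$; if all of these vanish then $\widehat{f}\equiv 0$ on a neighborhood of $0$, hence $\widehat{f}\equiv 0$ everywhere by analytic continuation, so $f\equiv 0$. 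I would present the Stone--Weierstrass argument as the primary proof since it is the most elementary and self-contained.
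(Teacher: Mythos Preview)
Your proof is correct and follows essentially the same route as the paper: approximate $f$ uniformly by polynomials on a compact set containing its support via Stone--Weierstrass, use the vanishing moments to kill $\int fP$, and conclude $\int|f|^2=0$. The only cosmetic differences are that the paper works directly on $K=\mathrm{supp}\,f$ with a sequence $P_n\to f$ rather than your $\varepsilon$-version on a ball $B\supset K$, and the paper does not mention the Fourier-analytic alternative.
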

The proof of Proposition 1.7 is given in Appendix C. According to Proposition 1.7, we have 
\begin{proposition}
Suppose that $f\in C_c(\R^n)$ is nontrivial. Then the following two cases happen: 
(I) $k_0=\max \{ k\in \N_0\,|\, \Lambda_k(f)\neq \phi\,\}$ exists. 
(II) $\sup \{ k\in \N_0\, | \, \Lambda_k(f)\neq \phi\,\}= \infty$.
\end{proposition}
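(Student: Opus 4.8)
The plan is to reduce the assertion to an elementary dichotomy for subsets of $\N_0$, the only substantive input being Proposition 1.7. First I would set
$$
S=\{\,k\in\N_0\,:\,\Lambda_k(f)\neq\phi\,\},
$$
and check that $S\neq\phi$. Indeed, if $S$ were empty, then $\M_\alpha(f)=0$ for every $\alpha\in(\N_0)^n$; since $f\in C_c(\R^n)$, Proposition 1.7 would force $f\equiv 0$, contradicting the nontriviality of $f$. Hence there is some multi-index $\alpha$ with $\M_\alpha(f)\neq 0$, and then $|\alpha|\in S$.

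Next I would split according to whether $S$ is bounded above. If $S$ admits an upper bound in $\N_0$, then, being a nonempty set of nonnegative integers bounded above, it attains its maximum; putting $k_0=\max S$ yields case (I), with $\Lambda_{k_0}(f)\neq\phi$ and $\Lambda_k(f)=\phi$ for all $k>k_0$. If, on the other hand, $S$ has no upper bound, then for each $N\in\N_0$ there is $k\in S$ with $k>N$, which is precisely the statement $\sup\{\,k\in\N_0\,:\,\Lambda_k(f)\neq\phi\,\}=\infty$, i.e. case (II). These two alternatives are complementary, so exactly one of (I), (II) occurs.

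I do not expect any genuine obstacle here: all the mathematical content sits in the nonemptiness of $S$, which is handed to us by the Stone--Weierstrass argument underlying Proposition 1.7, and the remaining step is the trivial fact that a nonempty subset of $\N_0$ is either bounded (hence possesses a greatest element) or unbounded. The only point that merits explicit mention is that Proposition 1.7 is invoked legitimately because $f$ is assumed both continuous and compactly supported.
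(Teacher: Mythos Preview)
Your argument is correct and matches the paper's approach: the paper simply states that Proposition 1.8 follows from Proposition 1.7, and you have spelled out exactly this reduction---nonemptiness of $S$ via Proposition 1.7, followed by the trivial bounded/unbounded dichotomy for subsets of $\N_0$.
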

Combining Theorem 1.5 and Proposition 1.8, we obtain
\begin{corollary}
Suppose that the initial data $f\in C_c(\R^n)$ is nontrivial.

In case (I) of Proposition 1.8, the following statement holds: 
Assume that $f$ satisfies Condition A with $k=k_0$. Then the solution $u=u(x,t)$ to (\ref{eqn:1.1}), (\ref{eqn:1.2}) given by 
(\ref{eqn:1.3}) and the $k_0$-th order modified heat kernel $\hat{G}_t^{(k_0)}(x)$ of
(\ref {eqn:1.12}) with the spatial and time shifts
$x^{*,\alpha}$, $t^{*,\alpha}$, $\alpha\in\Lambda_{k_0}(f)$ defined
by (\ref{eqn:1.13}), (\ref{eqn:1.b}) obey the 
estimate for $1\leq p \leq \infty$ such that
\begin{equation}\label{eqn:1.24}
\displaystyle \lim_{t\rightarrow\infty} t^{\frac{k_0+2}{2}+\frac{n}{2}(1-\frac{1}{p})}
\|u(\cdot,t)-\hat{G}_t^{(k_0)}(\cdot;x^*,t^*)\|_{L^p(\R^n)}=0.
\end{equation}

In case (II) of Proposition 1.8, there exists a sequence $\{k_i\}\subset \N$ with $\lim_{i\rightarrow\infty} k_i=\infty$ such that 
$\Lambda_{k_i}(f)\neq \phi,\, i=1,\dots$. Then each $k_i$-th order modified heat kernel $\hat{G}_t^{(k_i)}(x)$ of (\ref{eqn:1.12}) with 
the spatial $x^{*,\alpha}$, $\alpha \in \Lambda_{k_i}(f)$, taken as in (\ref{eqn:1.13}) and trivial total time shifts $t^*=0$, and the
solution $u=u(x,t)$ to (\ref{eqn:1.1}), (\ref{eqn:1.2}) given by (\ref{eqn:1.3}) obey the estimate for $1\leq p \leq \infty$ such that
\begin{equation}\label{eqn:1.25}
\displaystyle \lim_{t\rightarrow\infty} t^{\frac{k_i+1}{2}+\frac{n}{2}(1-\frac{1}{p})}\|u(\cdot,t)-\hat{G}_t^{(k_i)}(\cdot; x^*,0))\|
_{L^p(\R^n)}=0\,\,\,\mbox{for}\,\,i=1,2,\dots.
\end{equation}
\end{corollary}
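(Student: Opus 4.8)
The plan is to deduce Corollary 1.9 directly from Theorem 1.5 and Remark 1.6 (i), combined with the dichotomy of Proposition 1.8; no new analysis is needed. The single elementary ingredient is that every $f\in C_c(\R^n)$ satisfies $\int_{\R^n}(1+|x|)^m|f(x)|\,dx<\infty$ for all $m\in\N_0$: since $f$ is bounded and supported in a ball of finite radius, the integrand $(1+|x|)^m|f(x)|$ is bounded on a set of finite measure, hence integrable. Consequently every moment $\M_\alpha(f)$ is finite, and all the moment-integrability hypotheses occurring in Theorem 1.5 and in Remark 1.6 (i) are automatically at hand, of whatever order. Since $f$ is nontrivial, Proposition 1.8 applies and exactly one of the cases (I), (II) occurs.

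In case (I) I would invoke Theorem 1.5 with $k=k_0$. By the definition of $k_0$ in Proposition 1.8 (I) we have $\Lambda_{k_0}(f)\neq\phi$; the condition $\int_{\R^n}(1+|x|)^{k_0+2}|f(x)|\,dx<\infty$ holds by the observation above; the spatial shifts $x^{*,\alpha}$, $\alpha\in\Lambda_{k_0}(f)$, are chosen by (\ref{eqn:1.13}) and the time shifts $t^{*,\alpha}$ by (\ref{eqn:1.b}); and Condition A, i.e. (\ref{eqn:1.a}) and (\ref{eqn:1.c}) with $k=k_0$, is exactly the standing hypothesis of this case of the corollary. Thus every hypothesis of Theorem 1.5 is met, and its conclusion is precisely (\ref{eqn:1.24}).

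In case (II), from $\sup\{\,k\in\N_0\,|\,\Lambda_k(f)\neq\phi\,\}=\infty$ I would extract a strictly increasing sequence $\{k_i\}\subset\N$ with $k_i\to\infty$ and $\Lambda_{k_i}(f)\neq\phi$ for every $i$. For each fixed $i$ the hypotheses of Remark 1.6 (i) with $k=k_i$ are satisfied --- $f\in L^1(\R^n)$, $\int_{\R^n}(1+|x|)^{k_i+1}|f(x)|\,dx<\infty$, and $\Lambda_{k_i}(f)\neq\phi$ --- so, taking the spatial shifts by (\ref{eqn:1.13}) and the trivial total time shift $t^*=0$, Remark 1.6 (i) yields (\ref{eqn:1.25}) for that $i$. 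Since $i$ was arbitrary, (\ref{eqn:1.25}) holds for all $i=1,2,\dots$, which completes the argument.

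I do not expect a genuine obstacle: the entire analytic content --- the Taylor-formula construction of the modified heat kernels and the careful $L^p$ error estimates --- already resides in Theorem 1.5 and Remark 1.6 (i), and the structural statement is Proposition 1.8. The only points that deserve (minimal) care are recording that the $C_c$ hypothesis supplies finite moments of every order, so that the hypotheses of Theorem 1.5 and Remark 1.6 (i) are freely available, and, in case (II), observing that ``$\sup=\infty$'' genuinely produces a sequence $k_i\to\infty$ rather than merely an unbounded set of admissible orders.
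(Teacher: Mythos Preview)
Your proposal is correct and matches the paper's own approach: the paper does not give a separate proof of the corollary but simply states that it follows by ``Combining Theorem 1.5 and Proposition 1.8,'' and you have written out exactly those details. Your explicit observation that case (II) requires Remark 1.6 (i) rather than Theorem 1.5 proper (since no Condition A is assumed and $t^*=0$) is a useful clarification, as is your remark that $f\in C_c(\R^n)$ automatically furnishes all the moment-integrability hypotheses.
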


Finally, we present a generalization of Theorem 1.3 to the degenerate case.
\begin{thm}
Let $k\geq 1$ be an integer, and let $f\in L^1(\R^n)$, $\Lambda_k(f)\neq \phi$.

\mbox{(I)} When $k$ is even, we assume that
$$
\int_{\R^n} (1+|x|)^{k+3} |f(x)|\,dx <\infty
$$
and $f$ satisfies (\ref{eqn:1.a}) in Condition A and $\M_{\alpha+\alpha'}(f)=0$ for all $\alpha\notin \Lambda_k(f)$
with $|\alpha|=k$ and all $\alpha'\in(\N_0)^n$ with $|\alpha'|=2$. Then the k-th order modified heat kernel $\Gk(x)$ of
(\ref{eqn:1.12}) with the time shifts $t^{*,\alpha}$,
$\alpha\in\Lambda_k(f)$, defined by (\ref{eqn:1.b}) and trivial total spatial shift $x^*=0$, and the solution $u=u(x,t)$ to 
(\ref{eqn:1.1}), (\ref{eqn:1.2}) given by (\ref{eqn:1.3}) obey the estimate for $1\leq p \leq \infty$ such that
\begin{equation}\label{eqn:1.26}
\displaystyle \lim_{t\rightarrow\infty} t^{\frac{k+3}{2}+\frac{n}{2}(1-\frac{1}{p})}
\|[u(\cdot,t)]-\Gk(\cdot; 0,t^*)|S^{n-1}|\,\|_{L^p(\R^n)}=0.
\end{equation}

\medskip

(II) When $k$ is odd, we assume that
$$
\int_{\R^n} (1+|x|)^{k+2} |f(x)|\,dx <+\infty
$$
and $f$ satisfies that $\M_{\alpha+e_i}(f)=0$ for all $\alpha\notin \Lambda_k(f)$ with $|\alpha|=k$ and all $i=1,\dots,n$.
Then the k-th order modified heat kernel $\Gk(x)$ of (\ref{eqn:1.12}) with the spatial shifts $x^{*,\alpha}$,
$\alpha\in\Lambda_k(f)$, defined by (\ref{eqn:1.13}) and trivial total time shift $t^*=0$, and the solution $u=u(x,t)$ to
 (\ref{eqn:1.1}), (\ref{eqn:1.2}) given by (\ref{eqn:1.3}) obey the estimate for $1\leq p \leq \infty$ such that
\begin{equation}\label{eqn:1.27}
\displaystyle \lim_{t\rightarrow\infty} t^{\frac{k+2}{2}+\frac{n}{2}(1-\frac{1}{p})}\|[u(\cdot,t)]-[\Gk(\cdot; x^*,0)]\|_{L^p(\R^n)}=0.
\end{equation}
\end{thm}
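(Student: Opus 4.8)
\emph{Proof plan.} The argument runs parallel to the proofs of Theorems 1.1, 1.3 and 1.5: write $u(\cdot,t)=G_t*f$, expand $G_t(x-y)$ by Taylor's formula in $y$ about the origin with integral remainder, integrate against $f$ so that the polynomial part becomes a finite combination of $\M_\alpha(f)\,\partial_x^\alpha G_t(x)$, absorb the leading surviving terms into $\Gk$ by means of the defining relations (\ref{eqn:1.13}) and (\ref{eqn:1.b}) for the shifts, and estimate each remainder in $L^p$ through the scaling identity $\|\partial_x^\beta G_t\|_{L^p(\R^n)}=C_{\beta,p}\,t^{-|\beta|/2-\frac n2(1-\frac1p)}$ together with Young's inequality and the weighted $L^1$ hypothesis on $f$. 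The new ingredient here is the parity relation
\[
\bigl[\partial_x^\beta G_t\bigr]\equiv 0\qquad\text{whenever }|\beta|\text{ is odd},
\]
which holds because $G_t$ is radial, so that $\partial_x^\beta G_t(-x)=(-1)^{|\beta|}\partial_x^\beta G_t(x)$ and the spherical average annihilates the odd part. Since $\Lambda_k(f)\ne\phi$, all moments of order $<k$ vanish, so the low-order sum in (\ref{eqn:1.12}) is absent and $\Gk$ reduces to its $\Lambda_k$-sum.

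\emph{Case (II): $k$ odd.} After spherical averaging the order-$k$ main term of the expansion of $u$ disappears ($k$ odd), so the first surviving contribution is of order $k+1$. Expand $G_t(x-y)$ to order $k+1$, so that $u=(\text{order-}k\text{ sum})+(\text{order-}(k+1)\text{ sum})+\int_{\R^n}R_{k+1}(x,y)f(y)\,dy$, and Taylor-expand each shifted derivative $\partial_x^\alpha G_t(x-x^{*,\alpha})$ to first order in $x^{*,\alpha}$. Using (\ref{eqn:1.13}) in the form $\M_\alpha(f)\,x^{*,\alpha}_i=\M_{\alpha+e_i}(f)/(k+1)$, the identity $\sum_i\beta_i=|\beta|$, and the hypothesis $\M_{\alpha+e_i}(f)=0$ for $\alpha\notin\Lambda_k(f)$, $|\alpha|=k$ — which lets the sum over $\Lambda_k(f)$ be completed to the full sum over $|\alpha|=k$ — the first-order shift terms reproduce exactly the order-$(k+1)$ term of $u$. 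Applying $[\,\cdot\,]$ then kills the order-$k$ terms on both sides, and $[u]-[\Gk]$ is left as $\bigl[\int R_{k+1}f\,dy\bigr]$ minus a weighted sum of second-order-in-$x^{*,\alpha}$ remainders, each $O\bigl(t^{-(k+2)/2-\frac n2(1-\frac1p)}\bigr)$ in $L^p$; this is (\ref{eqn:1.27}).

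\emph{Case (I): $k$ even.} Now the order-$k$ term survives spherical averaging, the order-$(k+1)$ term (odd) drops, and the leading error to be removed is of order $k+2$. Expand $u$ to order $k+2$ — this is why the $(1+|x|)^{k+3}$-weighted bound is needed: it controls $\int R_{k+2}f\,dy$ at rate $(k+3)/2$ — while for $\Gk$, whose only shifts are temporal, use the heat-semigroup expansion $G_{t-t^{*,\alpha}}=G_t-t^{*,\alpha}\Delta G_t+(\text{second order in }t^{*,\alpha})$, valid since $\partial_t G_t=\Delta G_t$, the second-order piece being negligible of order $k+4$. With $\M_\alpha(f)\,t^{*,\alpha}=-c_\alpha$ from (\ref{eqn:1.b}), matching $[u]$ and $[\Gk]$ at order $k+2$ reduces, after spherical averaging, to an algebraic identity linking the $(k+2)$-moments of $f$ to the $c_\alpha$ and the traces $\bigl[\Delta\partial_x^\alpha G_t\bigr]$; this is exactly supplied by Condition A (\ref{eqn:1.a}) together with the hypothesis $\M_{\alpha+\alpha'}(f)=0$ for $\alpha\notin\Lambda_k(f)$, $|\alpha|=k$, $|\alpha'|=2$ (again completing the $\Lambda_k$-sum), only multi-indices with all components even contributing by parity. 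Thus, using as in Remark 1.4(i) that with trivial total spatial shift the spherical average of the modified heat kernel is $\Gk|S^{n-1}|$, the difference $[u]-\Gk(\cdot;0,t^*)|S^{n-1}|$ consists only of remainders of order $\ge k+3$, which is (\ref{eqn:1.26}).

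\emph{From $O$ to $o$, and the main difficulty.} To pass from these bounds to the limits in (\ref{eqn:1.26})--(\ref{eqn:1.27}), rescale $x=\sqrt t\,\xi$: after multiplication by the stated power of $t$, each remaining remainder converges in $L^p_\xi$, by dominated convergence using the weighted $L^1$ bound on $f$, to a fixed profile that is a linear combination of $\partial_x^\gamma G_1$ with $|\gamma|$ odd — precisely the order killed by the spherical average — so the limit is $0$. The crux of the whole proof is the \emph{exact} cancellation of the leading surviving order after spherical averaging, namely the order-$(k+1)$ identity in case (II) and the order-$(k+2)$ identity in case (I): these are purely algebraic relations among the moments of $f$, the shifts (\ref{eqn:1.13})/(\ref{eqn:1.b}), Condition A and the supplementary vanishing hypotheses, but they must be combined with the explicit structure of $\bigl[\partial_x^\beta G_t\bigr]$ — its vanishing for $|\beta|$ odd and the trace relations for $|\beta|$ even — rather than with $\partial_x^\beta G_t$ itself, and it is the bookkeeping of the combinatorial constants, especially the completion of the sums over $\Lambda_k(f)$ to full sums via the extra moment conditions, that is the real work. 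The remaining steps (the kernel-derivative $L^p$ bounds, Young's inequality for the remainder convolutions, and the rescaling argument) are routine and parallel the earlier proofs.
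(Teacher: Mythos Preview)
Your plan follows the paper's strategy: Taylor-expand $G_t(x-y)$, match the leading surviving orders to $\Gk$ via the shift relations, exploit the parity identity $[\partial_x^\beta G_t]=0$ for $|\beta|$ odd (which the paper derives from two dedicated lemmas, one on $\int_{S^{n-1}}\omega^\alpha\,d\omega$ and one on the Hermite-polynomial structure of $\partial_x^\alpha G_t$), and estimate the remainders in $L^p$.

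One genuine error: the assertion ``Since $\Lambda_k(f)\ne\phi$, all moments of order $<k$ vanish'' is false. The hypothesis $\Lambda_k(f)\ne\phi$ says only that \emph{some} moment of order $k$ is nonzero; it says nothing about lower orders. This does not actually damage the proof, because the low-order block $\sum_{|\alpha|\le k-1}\frac{(-1)^{|\alpha|}}{\alpha!}\M_\alpha(f)\,\partial_x^\alpha G_t$ occurs identically in both the Taylor expansion of $u$ and in the definition (\ref{eqn:1.12}) of $\Gk$, hence cancels in $u-\Gk$; but you should not claim it is absent.

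There is also a methodological difference in how the $o$ is obtained. You expand $u$ one order short of the paper --- to order $k+1$ in case (II) and $k+2$ in case (I) --- so your Taylor remainder is only $O$ at the target rate, and you then upgrade to $o$ by arguing that the rescaled limit profile consists of odd-order derivatives of $G_1$, annihilated by $[\cdot]$. The paper instead expands one order further (to $k+2$ in (II), $k+3$ in (I)): the extra explicit term then has odd total order and is killed by $[\cdot]$ \emph{exactly}, while the residual integral remainder, written in the subtracted form $(\partial_x^\alpha G_t)(x-\theta y)-(\partial_x^\alpha G_t)(x)$, is directly $o$ at the target rate by dominated convergence, with no interaction between the limit and the spherical average. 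Your route can be made to work (continuity of $g\mapsto[g]$ on $L^p$ lets one pass the limit through the average), but as stated the sentence ``converges\ldots to a fixed profile that is a linear combination of $\partial_x^\gamma G_1$ with $|\gamma|$ odd'' is not accurate for the second-order-in-shift remainders --- there the rescaled limit is simply $0$, since the fixed shifts $x^{*,\alpha}$, $t^{*,\alpha}$ are swamped by $\sqrt t$ --- and for the main Taylor remainder you still owe the justification of $L^p$ convergence prior to averaging. The paper's extra order of expansion sidesteps this entirely.
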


\medskip

Long-time asymptotic behaviors of the solutions to the evolution equations of diffusion type have been extensively studied in 
many papers by various methods. Among others, Kleinstein and Ting \cite{KlTi}
and Witelski and Bernoff \cite{WiBe} introduced the modified heat kernel in the non-degenerate case
as a long-time asymptotic profile to the heat equation. They determined the three parameters of the modified heat kernel formally through the eigenfunction expansion around the self-similar solution, although no precise error terms were given there.
In Yanagisawa \cite{Ya}, for the 1-D heat equation, modified heat kernels were introduced in the degenerate 
case and exact error estimates were given.

Ishige, Kawakami and Kobayashi \cite{IsKaKo} studied a nonlinear integral equation, including the heat equation as a special case.
They introduced a crucial time-dependent operator which maps the initial data to the function of which all moments vanish for 
each $t>0$ in order to present a long-time asymptotic profile behaving like a multiple of the integral kernel. 
Finally, we mention that Fujigaki and Miyakawa \cite{FuMi} gave pioneering results on long-time asymptotic profiles of the solution 
to the initial value problem for the incompressible Navier-Stokes equation. We note that incompressibility of the initial velocity 
generally implies the degeneracy of the 0-th order moment of the each component.  

This paper is organized as follows: In Section 2, we shall derive an explicit form of the error term of the heat solution given by 
(\ref{eqn:1.3}) for the k-th order modified heat kernel $\Gk(x)$ in (\ref{eqn:1.12}),  using Taylor's formula systematically. 
Through this process, we will explain the reason why Condition A shall be required for initial higher-order moments and the total 
spatial and time shifts $x^*$ and $t^*$ of the k-th order modified heat kernel $\Gk(x)$ shall be taken as in (\ref{eqn:1.13}) and 
(\ref{eqn:1.b}), respectively.  
Section 3 provides the proof of Theorem 1.5, in which the proof of Theorem 1.1 is included as a special case. The error term 
given in the preceding section is estimated in $L^p(\R^n)$ carefully. We next prove 
Theorem 1.10, which includes Theorem 1.3 as the case when $k=0$, based on two elementary observations:
the first one relates to the integral of polynomials on $S^{n-1}$ (see Lemma 3.2) and the second one concerns an explicit form 
of the spatial derivatives of the heat kernel $G_t(x)$ (see Lemma 3.3).

Throughout this paper, we use notations such that for a multi-index $\alpha =(\alpha_1,\dots,\alpha_n)$,
$\alpha_i\in \N_0(=\N\cup\{0\}), \, i=1,\dots,n$, 
$\displaystyle \partial_x^\alpha=\frac{\partial^{|\alpha|}}{\partial {x_1}^{\alpha_1} \dots \partial {x_n}^{\alpha_n}}$
with $|\alpha|=\sum^n_{i=1}\alpha_i$, and $x^\alpha={x_1}^{\alpha_1} \dots {x_n}^{\alpha_n}$, $\alpha !=\alpha_1 ! \dots \alpha_n !$.
We set for abbreviation $\displaystyle \partial_{x_i}=\frac{\partial}{\partial x_i}, i=1,\dots,n, \,\partial_t=\frac{\partial}{\partial t}$.

\section{Derivation of  an explicit form of the error term}
We assume here that the initial data $f(x)$ decays sufficiently fast at infinity. 
Applying Taylor's formula to the kernel $G_t(x-y)$ of (\ref{eqn:1.3}) around $-y=0$, we see that the heat solution $u(x,t)$ given 
by (\ref{eqn:1.3}) is expanded as, for any $k\in\N_0$, 
\begin{multline}\label{eqn:2.1}
u(x,t) = \int_{\R^n} G_t(x-y) f(y)\,dy  \\
    = \displaystyle \sum^{k+2}_{i=0} \frac{1}{\,i!\,}\int_{\R^n}\, L_n(D)^i\,G_t(x)\,f(y)\,dy+\int_{\R^n}\,R_{k+2} (G_t)(x,y)\,f(y) \,dy \hspace{20pt} \\
 = \displaystyle \sum_{|\alpha|\leq k-1}\frac{\,(-1)^{|\alpha|}\,}{\alpha!}\M_\alpha(f)\partial^\alpha_x G_t(x) +
 \sum_{\alpha \in \Lambda_k(f) }\frac{\,(-1)^{|\alpha|}\,}{\alpha!}\M_\alpha(f)\partial^\alpha_x G_t(x) \hspace{8pt} \\
  \displaystyle+\frac{1}{\,(k+1)!\,}\int_{\R^n}\, L_n(D)^{k+1}\,G_t(x)\,f(y)\,dy +\frac{1}{\,(k+2)!\,}\int_{\R^n}\, L_n(D)^{k+2}\,G_t(x)\,f(y)\,dy\\
 \displaystyle +\int_{\R^n}\,R_{k+2} (G_t)(x,y) \,f(y) \,dy, 
\end{multline}
where 
$$
L_n(D)=\sum^n_{i=1}(-y_i)\frac{\partial}{\partial x_i}
$$ 
and the remainder term $R_{\ell}(G_t)(x,y)$, $\ell \in \N$, is defined by 
\begin{eqnarray*}\label{eqn:2remainder}
R_{\ell}(G_t)(x,y)&=&\frac{1}{\,(\ell-1)!\,}\int^1_0 (1-\theta)^{\ell-1} \left(\frac{d}{d\theta}\right)^\ell G_t(x-y\theta)\,d\theta \n\\
&=& (-1)^\ell \, \ell \int^1_0  (1-\theta)^{\ell-1} \sum_{|\alpha|=\ell}\frac{1}{\alpha !} \Bigl( (\partial^\alpha_x G_t)(x-y\theta)-(\partial^\alpha_x G_t)(x)  \Bigr) y^\alpha\,d\theta \n \\
\end{eqnarray*}
When $k=0$, the summation over $|\alpha|\leq k-1$ of (\ref{eqn:2.1}) shall be removed.  
We further observe that the next two terms of  (\ref{eqn:2.1}) are expressed as follows:
\begin{eqnarray}\label{eqn:2.2}
 \frac{1}{\,(k+1)!\,}\int_{\R^n}\, L_n(D)^{k+1}\,G_t(x)\,f(y)\,dy&=& \displaystyle 
 \frac{1}{\,(k+1)!\,}\int_{\R^n}\, L_n(D)^{k}\left(\sum^n_{i=1}(-y_i)\partial_{x_i}\right)\,G_t(x)\,f(y)\,dy \nonumber \\
 &=&\displaystyle \frac{-1}{\,(k+1)\,}\sum_{|\alpha|=k}\sum_{i=1}^n \frac{(-1)^k}{\alpha!}
\partial_x^{\alpha+e_i}G_t(x)\M_{\alpha+e_i}(f)
 \end{eqnarray}
 and
\begin{eqnarray}\label{eqn:2.3}
 \frac{1}{\,(k+2)!\,}\int_{\R^n}\, L_n(D)^{k+2}\,G_t(x)\,f(y)\,dy&=& \displaystyle 
 \frac{1}{\,(k+2)!\,}\int_{\R^n}\, L_n(D)^{k}\left(\sum^n_{i=1}(-y_i)\partial_{x_i}\right)^2\,G_t(x)\,f(y)\,dy \nonumber \\
 &=&\displaystyle \frac{1}{\,(k+2)!\,}\int_{\R^n}\, L_n(D)^{k}\left(\sum_{|\alpha'|=2}\frac{\,2!y^{\alpha'}\,}{\alpha' !}
 \,\partial^{\alpha'}_x G_t(x)\right)\,f(y)\,dy \\
 &=& \displaystyle \frac{1}{(k+2)(k+1)}\sum_{|\alpha|=k}\sum_{|\alpha'|=2}\frac{(-1)^k}{\alpha!}\frac{\,2!\,}{\alpha' !}
 \, \M_{\alpha+\alpha'}(f)\partial_x^{\alpha+\alpha'}G_t(x).\nonumber 
 \end{eqnarray}

We next show that the k-th order modified heat kernel $\Gk(x)$ of (\ref{eqn:1.12}) turns out to be 
\begin{multline}\label{eqn:2.4}
\Gk(x)=\Gk(x; x^*,t^*)  \\ 
\hspace{12pt}=\displaystyle \sum_{0\leq|\alpha|\leq k-1}\frac{\,(-1)^{|\alpha|\,}}{\alpha!} \M_\alpha(f)\partial_x^\alpha G_t(x)
+\sum_{\alpha \in \Lambda_k(f)}\frac{\,(-1)^{|\alpha|\,}}{\alpha!} \M_\alpha(f)\partial_x^\alpha G_t(x)\\
-\displaystyle \sum_{\alpha \in \Lambda_k(f)}\sum^n_{i=1}\frac{\,(-1)^k\,}{\alpha!} \M_\alpha(f)x^{*,\alpha}_i\partial_x^{\alpha+e_i} G_t(x) 
+\sum_{\alpha \in \Lambda_k(f)}\sum_{|\alpha'|=2}\frac{(-1)^k}{\alpha!}\frac{\,1\,}{\alpha' !}\, 
\M_{\alpha}(f) (x^{*,\alpha})^{\alpha'} \partial_x^{\alpha+\alpha'}G_t(x)  \\
-\sum_{\alpha \in \Lambda_k(f)}\sum_{i=1}^n \frac{(-1)^k}{\alpha!}\M_\alpha(f) t^{*,\alpha} \,\partial^2_{x_i}\partial^\alpha_x G_t(x) +
\hat{R}_k(x,t).
\end{multline}
In fact, applying Taylor's formula to the term $\partial_x^\alpha G_{t-t^{*,\alpha}}(x-x^{*,\alpha})$ of $\Gk(x)$ around 
$-t^{*,\alpha}=0$ and $x^{*,\alpha}=0$,  using the fact that $G_t(x)$ for $t>0$ satisfies the heat equation (\ref{eqn:1.1}), 
we have 
\begin{multline}\label{eqn:2.5}
\partial^\alpha_x G_{t-t^{*,\alpha}}(x-x^{*,\alpha})=\displaystyle \partial^\alpha_x G_t(x)-\sum^n_{i=1} \partial^{\alpha+e_i}_x G_t(x) x^{*,\alpha}_i+
\sum_{|\alpha'|=2}\frac{1}{\,\alpha'!\,}\partial^{\alpha+\alpha'}_x G_t(x)(x^{*,\alpha})^{\alpha'}\\
-\frac{\partial}{\partial t}\partial_x^{\alpha} G_t(x)t^{*,\alpha}+\tilde{R}_\alpha(x,t)\\
\hspace{98pt}=\displaystyle \partial^\alpha_x G_t(x)-\sum^n_{i=1} \partial^{\alpha+e_i}_x G_t(x) x^{*,\alpha}_i+
\sum_{|\alpha'|=2}\frac{1}{\,\alpha'!\,}\partial^{\alpha+\alpha'}_x G_t(x)(x^{*,\alpha})^{\alpha'}\\
-\sum_{i=1}^n\,\partial_{x_i}^2\partial_x^{\alpha} G_t(x)t^{*,\alpha}+\tilde{R}_\alpha(x,t).
\end{multline}    
Here the remainder terms $\hat{R}_k(x,t)$ of (\ref{eqn:2.4}) and $\tilde{R}_\alpha(x,t)$ of (\ref{eqn:2.5}) are given by 
\begin{equation}\label{eqn:2.5.1}
\hat{R}_k(x,t)=\displaystyle \sum_{\alpha \in \Lambda_k(f)}\frac{(-1)^k}{\,\alpha!\,}\M_{\alpha}(f)\tilde{R}_\alpha(x,t)
\end{equation}
and 
\begin{multline*}
\tilde{R}_\alpha(x,t)=T_1(\partial^\alpha_xG_t)(x-x^{*,\alpha},t,t^{*,\alpha})+R_2(\partial^\alpha_x G_t)(x,x^{*,\alpha})+
R_1(\partial_t\partial^\alpha_x G_t)(x,x^{*,\alpha})(-t^{*,\alpha}),
\end{multline*}
where 
$$\displaystyle 
T_\ell(\partial^\alpha_xG_t)(x,t,t')=\int^1_0 \,(1-\theta)\Bigl\{ \left(\partial^\ell_t(\partial^\alpha_x G_t)\right)_{t-\theta t'}(x)
-\partial^\ell_t (\partial^\alpha_x G_t(x))\Bigr\}(-t')\,d\theta \,\,\,\,\mbox{for}\,\,\ell\in\N
$$
with 
$$\displaystyle
\left(\partial^\ell_t(\partial^\alpha_x G_t)\right)_\tau(x)=\partial^\ell_t(\partial^\alpha_x G_t(x))|_{t=\tau}.
$$

It follows from (\ref{eqn:2.1}), (\ref{eqn:2.2}), (\ref{eqn:2.3}) and (\ref{eqn:2.4}) that the error term of 
the heat solution $u(x,t)$ of (\ref{eqn:1.3}) for the k-th order modified heat kernel $\Gk(x)$ is explicitly given by 
\begin{equation}\label{eqn:2.6}
u(x,t)-\Gk(x)
=I_1+I_2+I_3+\int_{\R^n}\,R_{k+2} (G_t)(x,y) \,f(y) \,dy-\hat{R}_k(x,t),
\end{equation}
where
\begin{multline}\label{eqn:2.7}
I_1=\displaystyle \sum_{\alpha \in \Lambda_k(f)}\, \frac{(-1)^k}{\alpha!}\,\sum_{i=1}^n
\left\{-\,\frac{1}{\,(k+1)\,}\M_{\alpha+e_i}(f)+\M_\alpha(f)x_i^{*,\alpha}\right\}\partial_x^{\alpha+e_i}G_t(x),\\
\end{multline}
\begin{multline}\label{eqn:2.8}
I_2=\displaystyle \sum_{\alpha \in \Lambda_k(f)}\,\frac{(-1)^k}{\alpha!}\,\Biggl\{\,\sum_{|\alpha'|=2}\frac{2!}{\,\alpha'!\,}
\left( \frac{1}{\,(k+1)(k+2)\,}\M_{\alpha+\alpha'}(f)-\frac{1}{\,2!\,}\M_\alpha(f)(x^{*,\alpha})^{\alpha'}\right)
\partial^{\alpha+\alpha'}_x G_t(x) \\
\displaystyle +\M_\alpha(f) t^{*,\alpha}\left(\sum^n_{i=1}\partial_{x_i}^2\partial^\alpha_xG_t(x)\right)\Biggr\},
\end{multline}
and
\begin{multline}\label{eqn:2.9}
I_3=\displaystyle \sum_{|\alpha|=k,\,\,\alpha\notin \Lambda_k(f)}
\,\frac{(-1)^k}{\alpha!}\,
\Biggl\{ -\,\frac{1}{\,(k+1)\,}\sum_{i=1}^n \M_{\alpha+e_i}(f)\partial_x^{\alpha+e_i}G_t(x) \\
+\frac{1}{\,(k+1)(k+2)}\sum_{|\alpha'|=2}\frac{2!}{\,\alpha'!\,}
\M_{\alpha+\alpha'}(f) \partial^{\alpha+\alpha'}_x G_t(x)\Biggr\}.
\end{multline}
We further divide $I_2$ into two parts $I_{2,1}$ and $I_{2,2}$ such that
$$\displaystyle 
I_{2,1}=\sum_{\alpha\in\Lambda(k)}\,\frac{(-1)^k}{\alpha!}\,\sum^n_{i,j=1,\,i\neq j}
2\left(\frac{1}{\,(k+1)(k+2)\,}\M_{\alpha+e_i+e_j}(f)-\frac{1}{\,2!\,}\M_\alpha(f)(x^{*,\alpha})^{e_i+e_j} \right)\partial_{x_i}\partial_{x_j}\partial^\alpha_x G_t(x),
$$
$$\displaystyle
I_{2,2}=\sum_{\alpha\in\Lambda_k(f)}\,\frac{(-1)^k}{\alpha!}\,\sum_{i=1}^n \left\{
\left(\frac{1}{\,(k+1)(k+2)\,}\M_{\alpha+2e_i}(f)-\frac{1}{\,2!\,}\M_\alpha(f)(x^{*,\alpha})^{2e_i} \right)+\M_\alpha(f) t^{*,\alpha}\right\}\partial^2_{x_i}\partial^\alpha_x G_t(x).
$$
It follows from (\ref{eqn:2.7}) that $I_1$ vanishes if the spatial shifts $x^{*,\alpha}$ satisfy the equalities that 
\begin{equation}\label{eqn:2.10}
-\,\frac{1}{\,(k+1)\,}\M_{\alpha+e_i}(f)+\M_\alpha(f)\,x_i^{*,\alpha}=0,\quad i=1,\dots,n, \, \,\mbox{for\,all }\,\,\alpha \in\Lambda_k(f).
\end{equation}
While, in view of $I_{2,1}$ and $I_{2,2}$, it seems natural to impose that the k-the, (k+1)-th and (k+2)-th order of 
the moments of the initial data $f$ satisfy the condition that for all $\alpha\in\Lambda_k(f)$, there exist constants 
$c_\alpha\in\R$ depending only on $\alpha$ such that 
\begin{equation}\label{eqn:2.11}
\displaystyle 
\frac{1}{\,(k+1)(k+2)\,} \M_{\alpha+e_i+e_j}(f)-\frac{1}{\,2!\,}\M_\alpha(f)(x^{*,\alpha})^{e_i+e_j}=c_\alpha \delta_{ij}\,\,\,
\mbox{for\,all}\,\,i,j=1,\dots,n.
\end{equation}  
Here $x^{*,\alpha}$ are the spatial shifts given by (\ref{eqn:2.10}). 
In fact, it is easy to see that $I_{2,1}$ vanishes under the condition (\ref{eqn:2.11}) and $I_{2,2}$ also vanishes 
if the time shifts $t^{*,\alpha}\in\Lambda_k(f)$ are taken as 
\begin{equation}\label{eqn:2.12}
\displaystyle 
c_\alpha+\M_\alpha(f)\,t^{*,\alpha}=0 \,\,\,\mbox{for}\,\,\,\alpha\in\Lambda_k(f),
\end{equation}
where $c_\alpha$ are the constants appearing in (\ref{eqn:2.11}).
Finally, we find that $I_3$ vanishes if  
\begin{equation}\label{eqn:2.13}
\displaystyle 
\M_{\alpha+e_i}(f)=\M_{\alpha+\alpha'}(f)=0 
\end{equation}
for all $\alpha\notin\Lambda_k(f)$ with $|\alpha|=k$, and for all $\alpha'$ with $|\alpha'|=2$, $i=1,\dots,n$.

In conclusion, under Condition A, by taking the total spatial and time shifts $x^*$ and $t^*$ as in  (\ref{eqn:1.13}) and (\ref{eqn:1.b}), we see that all $I_1$, $I_2$ and $I_3$ of (\ref{eqn:2.6}) vanish.

\section{Proof of Theorems}
We begin by recalling an elementary lemma on the $L^p$ estimates for derivatives of the heat kernel. 
\begin{lemma}\label{lem4}
Let $1 \leq p \leq \infty$. Then, for all $k \in \N_0$ and all multi-indices $\al\in (\N_0)^n$, it holds that
\begin{align*}
\|\pt_t^k \pt_x^\al G_t(x) \|_{L^p(\R^n)} \leq C\,t^{-k-\frac{|\al|}{2}-\frac{n}{2}(1-\frac{1}{p})} \quad \mbox{for\,\,all}\,\, t>0.
\end{align*}
Here $C$ is a constant depending only on $p$, $k$ and $\al$.
\end{lemma}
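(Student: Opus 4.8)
The plan is to reduce everything to the parabolic scaling of the Gaussian. First I would record the self-similar identity
\[
G_t(x) = t^{-\frac{n}{2}}\, G_1\!\left(t^{-\frac{1}{2}}x\right), \qquad t>0,
\]
which is immediate from the definition (\ref{eqn:1.4}). Differentiating via the chain rule, each $\pt_{x_i}$ contributes a factor $t^{-1/2}$, so that $\pt_x^\al G_t(x) = t^{-\frac{n}{2}-\frac{|\al|}{2}} (\pt_x^\al G_1)(t^{-1/2}x)$. To absorb the time derivatives I would use that $G_t$ solves (\ref{eqn:1.1}) for $t>0$, whence $\pt_t^k G_t = \Delta^k G_t$, so that $\pt_t^k \pt_x^\al G_t = \Delta^k \pt_x^\al G_t$ is a finite linear combination of $\pt_x^\beta G_t$ with $|\beta| = |\al| + 2k$. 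Combining the two observations gives
\[
\pt_t^k \pt_x^\al G_t(x) = t^{-\frac{n}{2}-\frac{|\al|}{2}-k}\,\Phi\!\left(t^{-\frac{1}{2}}x\right),
\qquad \Phi := \Delta^k \pt_x^\al G_1,
\]
where $\Phi$ is a fixed function depending only on $n$, $k$ and $\al$.

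The second step is to take $L^p$ norms of this identity and change variables $y = t^{-1/2}x$, with $dx = t^{n/2}\,dy$. For $1\leq p<\infty$ this yields
\[
\|\pt_t^k \pt_x^\al G_t\|_{L^p(\R^n)}^p
= t^{-p\left(\frac{n}{2}+\frac{|\al|}{2}+k\right)}\!\int_{\R^n}\bigl|\Phi(t^{-1/2}x)\bigr|^p\,dx
= t^{-p\left(\frac{n}{2}+\frac{|\al|}{2}+k\right)+\frac{n}{2}}\,\|\Phi\|_{L^p(\R^n)}^p,
\]
and taking $p$-th roots produces exactly the exponent $-k-\frac{|\al|}{2}-\frac{n}{2}(1-\frac{1}{p})$, with constant $C = \|\Phi\|_{L^p(\R^n)}$. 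The endpoint $p=\infty$ is the same computation with the supremum in place of the integral, so the $+\frac{n}{2}$ coming from the Jacobian is absent, which again matches the claimed exponent since $1-\frac{1}{\infty}=1$.

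The only points needing a word of justification, rather than a genuine obstacle, are: (i) $\Phi = \Delta^k \pt_x^\al G_1$ has the form (polynomial)$\,\times\,e^{-|x|^2/4}$, which follows by an immediate induction on $k$ and $|\al|$ since differentiating a function of that form again produces one of that form, whence $\|\Phi\|_{L^p(\R^n)}<\infty$ for every $p\in[1,\infty]$ so that $C$ is finite; and (ii) keeping the bookkeeping of the scaling exponents straight, in particular that the dilation Jacobian contributes $+\frac{n}{2p}$ to the final power of $t$. Both are routine, so I would expect the whole argument to occupy only a few lines.
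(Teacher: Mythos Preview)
Your argument is correct and follows essentially the same route as the paper's proof: both identify $\pt_t^k\pt_x^\al G_t(x)$ as $t^{-k-\frac{|\al|}{2}-\frac{n}{2}}$ times a fixed Schwartz function (polynomial times Gaussian) evaluated at $x/\sqrt{t}$, then change variables $z=x/\sqrt{t}$ in the $L^p$ norm. The only cosmetic difference is that the paper asserts the polynomial-Gaussian structure of $\pt_t^k\pt_x^\al G_t$ directly, whereas you first convert $\pt_t^k$ to $\Delta^k$ via the heat equation and then invoke the spatial scaling, which is a perfectly valid way to reach the same identity.
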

\begin{proof}
We note that there exists a polynomial $P_{k,\al}(z)$ of $z$ such that $\pt_t^k \pt_x^\al G_t(x)$ is expressed as 
\begin{align*}
\pt_t^k \pt_x^\al G_t(x)=t^{-k-\frac{\,|\al|\,}{2}-\frac{\,n\,}{2}}P_{k,\al}\left(\frac{x}{\,t^{\frac{1}{2}}\,}\right)\e\left(-\frac{\,|x|^2\,}{\,4t\,}\right).
\end{align*}
Therefore, by the change of variables from $x/t^{1/2}$ to $z$, we find that there exists a constant
$C=C(p,k, \al)$ such that
\begin{align*}
\|\pt_t^k \pt_x^\al G_t(x)\|_{L^p(\R^n_x)}&=\left(\int_{\R^n}\,\left| t^{-k-\frac{\,|\al|\,}{2}-\frac{\,n\,}{2}}P_{k,\al}
\left(\frac{x}{\,t^{\frac{1}{2}}\,}\right) \e\left(-\frac{\,|x|^2\,}{4t}\right)\right|^p\,dx\right)^{\frac{\,1\,}{p}}   \\
                                                              &= t^{-(k+\frac{\,|\al|\,}{2}+\frac{n}{2})} \left(\int_{\R^n} |P_{k,\al}(z)|^p \e
\left(-\frac{\,p z^2\,}{4}\right)\,t^{\frac{\,n\,}{2}}\,dz\right)^{\frac{\,1\,}{p}}    \\
                                                              &\leq C\, t^{-k-\frac{\,|\al|\,}{2}-\frac{\,n\,}{2}(1-\frac{\,1\,}{p})} \quad \mbox{for\,\,all} \;\,  t > 0,
\end{align*}
which gives the desired estimate.
\end{proof}
\noindent $\bf{Proof \,of \,Theorem \,1.5.}$
We give the proof of Theorem 1.5, which includes that of Theorem 1.1 as the case when $k=0$. 
Returning to (\ref{eqn:2.6}), we have
\begin{multline}
\|u(x,t)-\Gk(x) \|_{L^p(\R^n_x)} \leq \|I_1+I_2+I_3\|_{L^p(\R^n_x)} +\|\int_{\R^n}\,R_{k+2} (G_t)(x,y) \,f(y) \,dy\|_{L^p(\R^n_x)}\\
+\|\hat{R}_k(x,t)\|_{L^p(\R^n_x)}. \n
\end{multline}
Since $x^{*,\al}$ and $t^{*,\al}$, $\al\in\Lambda_k(f)$, are given by (\ref{eqn:1.13}) and (\ref{eqn:1.b}), we see from Condition A  that $I_1=I_2=I_3=0$.  
Hence, it remains to show the time decay estimates for the terms $\|\int_{\R^n}\,R_{k+2} (G_t)(x,y) \,f(y) \,dy\|_{L^p(\R^n_x)}$ and $\|\hat{R}_k(x,t)\|_{L^p(\R^n_x)} \n$.

Utilizing the integral form of the Minkovski inequality, we obtain 
\begin{multline}\label{eqn:3.1.1}
\|\int_{\R^n}\,R_{k+2} (G_t)(x,y) \,f(y) \,dy\|_{L^p(\R^n_x)} \\ 
= \Biggl(\int_{\R^n_x} \biggl| \Bigl(\int_{\R^n_y}\bigl( (-1)^{k+2}(k+2)\int^1_0(1-\theta)^{k+1}\sum_{|\al|=k+2}\frac{1}{\al !}\bigl(\pt^\al_xG_t(x-y\theta) -\pt^\al_x G_t(x)\bigr)  y^\al d\theta f(y) \bigr) dy\Bigr) \biggr|^p dx\Biggr)^{\frac{1}{p}}  \\
\leq (k+2) \int^1_0 \biggl(\int_{\R^n_x} \Bigl( \int_{\R^n_y} \sum_{|\al|=k+2} |\pt^\al_xG_t(x-y\theta)-\pt^\al_x G_t(x) \, |y|^{k+2}|f(y)| dy \Bigr)^p dx \biggr)^{\frac{1}{p}} d\theta \hspace{30pt} \\
\leq (k+2)\int^1_0 \Biggl(\int_{\R^n_y} \biggl(\int_{\R^n_x}\Bigl(\sum_{|\al|=k+2} |\pt^\al_xG_t(x-y\theta)-\pt^\al_x G_t(x) ||y|^{k+2}|f(y)|  \Bigr)^p dx\biggr)^{\frac{1}{p}} dy\Biggr)  d\theta \hspace{40pt}  \\
\leq (k+2)\int^1_0 \int_{\R^n_y} \sum_{|\al|=k+2} \| \pt^\al_xG_t(x-y\theta)-\pt^\al_x G_t(x)  \|_{L^p(\R^n_x)}  |y|^{k+2}|f(y)| dy d\theta. \hspace{93pt} 
\end{multline}
Similar argument used in the proof of Lemma \ref{lem4} yields 
\begin{multline}\label{eqn:3.2.1}
\sum_{|\al|=k+2}\|\pt^\al_xG_t(x-y\theta)-\pt^\al_x G_t(x)\|_{L^p(\R^n_x)}  \\ 
= t^{-\frac{\,k+2\,}{2}-\frac{\,n\,}{2}}\sum_{|\al|=k+2}\Bigl(\int_{\R^n_x}\Bigl|P_{0,\al}\Bigl(\frac{\,x-y\theta\,}{t^{\frac{1}{2}}}\Bigr)\e\Bigl(-\frac{\,|x-y\theta|^2\,}{4t}\Bigr) 
 -P_{0,\al}\Bigl(\frac{x}{\,t^{\frac{1}{2}}\,}\Bigr)\e\Bigl(-\frac{\,|x|^2\,}{4t}\Bigr)\Bigr|^p dx\Bigr)^{\frac{1}{\,p}\,} \hspace{28pt} \\ 
= t^{-\frac{,k+2\,}{2}-\frac{\,n\,}{2}}\sum_{|\al|=k+2}\Bigl( \int_{\R^n_z} \Bigl|P_{0,\al}(z-t^{-\frac{\,1\,}{2}}y\theta)\e
\Bigl(-\frac{\,|z-t^{-\frac{1}{2}}y\theta|^2\,}{4}\Bigr)
-P_{0,\al}(z)\e\Bigl(-\frac{\,|z|^2\,}{4}\Bigr)\Bigr|^p t^{\frac{\,n\,}{2}} dz\Bigr)^{\frac{\,1\,}{p}}  \\ 
= t^{-\frac{\,k+2\,}{2}-\frac{\,n\,}{2}+\frac{n}{\,2p\,}}\sum_{|\al|=k+2}\Bigl( \int_{\R^n_z}\Bigl|
P_{0,\al}(z-t^{-\frac{\,1\,}{2}}y\theta)G_1(z-t^{-\frac{\,1\,}{2}}y\theta)
-P_{0,\al}(z)G_1(z) \Bigl|^p dz\Bigr)^{\frac{\,1\,}{p}} \hspace{50pt} \\ 
\leq C t^{-\frac{\,k+2\,}{2}-\frac{\,n\,}{2}(1-\frac{\,1\,}{p})}\sum_{|\al|=k+2}\Bigl( \int_{\R^n_z}\Bigl|\pt^\al_x G_1
(z-t^{-\frac{\,1\,}{2}}y\theta)-\pt^\al_x G_1(z)\Bigr|^p dz \Bigr)^{\frac{\,1\,}{p}}. \hspace{114pt} 
\end{multline}
It follows from (\ref{eqn:3.1.1}) and (\ref{eqn:3.2.1}) that
\begin{align*}
\phantom{}\|\int_{\R^n}R_{k+2}(G_t)(x,y)f(y)\,dy\|_{L^p(\R^n_x)}\leq \,C\,t^{-\frac{k+2}{2}-\frac{n}{2}(1-\frac{1}{p})}\int^1_0\left(\int_{\R^n_y}\varphi^{(k+2)}_t(y,\theta)|y|^{k+2}|f(y)|\,dy\right)\,d\theta,
\end{align*}
where $\varphi^{(k+2)}_t(y,\theta)$ is defined by 
\begin{align}\label{eqn:varphi}
\varphi^{(\ell)}_t(y,\theta) = \sum_{|\al|=\ell}\| \pt^\al_x G_1\left(\cdot-t^{-\frac{1}{2}}y\theta\right)-\pt^\al_x G_1(\cdot)\|_{L^p(\R^n)}, \quad \ell=1,2,\dots.
\end{align}
By the dominated convergence theorem, we can see that 
\begin{align*}
\lim_{t \to \infty}\left( \varphi^{(\ell)}_t(y,\theta)\right) = 0 \;\;\mbox{for fixed}\,\,y \in \R^n, \,\, 
\theta\in [0,\,1], \; \mbox{and} \,\,\ell=1,2,\dots.
\end{align*}
Since the functions $\varphi^{(k+2)}_t(y,\theta)$ is bounded in $t$, $\theta$, $y$, and $|y|^{k+2}|f(y)|$ is integrable on $\R^n$, 
using the dominated convergence theorem again, we have
\begin{align*}
\lim_{t \to \infty}\int^1_0\left(\int_{\R^n_y}\varphi^{(k+2)}_t(y,\theta)|y|^{k+2}|f(y)|\,dy\right)\,d\theta =0.
\end{align*}
Therefore, we conclude that 
\begin{align}\label{eqn:3.3.1}
\lim_{t \to \infty}t^{\frac{k+2}{2}+\frac{n}{2}(1-\frac{1}{p})}\|\int_{\R^n}R_{k+2}(G_t)(x,y)f(y)\,dy\, \|_{L^p(\R^n_x)} = 0.
\end{align}

We turn to the term $\|\hat{R}_k(x,t)\|_{L^p(\R^n_x)}$. It is observed that
\begin{align*}
\|\hat{R}_k(x,t)\|_{L^p(\R^n_x)} \leq J_1(t)+J_2(t)+J_3(t)+J_4(t)
\end{align*}
where
\begin{align*}
J_1(t)&=\|\sum_{\al\in\Lambda_k(f)}\frac{(-1)^k}{\,\alpha!\,}\M_{\alpha}(f)R_2(\partial^\alpha_x G_t)(x,x^{*,\alpha})\|_{L^p(\R^n_x)},  \\
J_2(t)&=\displaystyle \|\sum_{\al\in\Lambda_k(f)}\frac{(-1)^k}{\,\alpha!\,}\M_{\alpha}(f)\sum_{i=1} \pt_t \pt^{\al+e_i}_xG_t(x)x^{*,\al}_i\,t^{*,\al}\|_{L^p(\R^n_x)},\\
J_3(t)&=\displaystyle\|\sum_{\al\in\Lambda_k(f)}\frac{(-1)^k}{\,\alpha!\,}\M_{\alpha}(f)R_1(\partial_t\partial^\alpha_x G_t)(x,x^{*,\alpha})(-t^{*,\alpha})\|_{L^p(\R^n_x)}\hspace{130pt}\\
\mbox{and}\hspace{50pt}&\\
J_4(t)&=\displaystyle \|\sum_{\al\in\Lambda_k(f)}\frac{(-1)^k}{\,\alpha!\,}\M_{\alpha}(f)T_1(\partial^\alpha_xG_t)(x-x^{*,\alpha},t,t^{*,\alpha})\|_{L^p(\R^n_x)}.
\end{align*}
Since the time decay estimates for $J_1(t)$, $J_2(t)$ and $J_3(t)$ can be obtained directly by Lemma 3.1, we are going to give the 
estimate only for $J_4(t)$. It is easy to see that 
\begin{align*}
J_4(t)&=\|\sum_{\al\in\Lambda_k(f)}\frac{(-1)^k}{\,\alpha!\,}\M_{\alpha}(f)T_1(\partial^\alpha_xG_t)(x-x^{*,\alpha},t,t^{*,\alpha})\|_{L^p(\R^n_x)} \\
&= \|\sum_{\al\in\Lambda_k(f)}\frac{(-1)^k}{\,\alpha!\,}\M_{\alpha}(f) \Bigl((-1)\int^1_0(1-\theta)\Bigl(\bigl(\pt_t \pt _x^\al G_t \bigr)_{t-\theta t^{*,\al}}(x-x^{*,\al}) \\
&\hspace{240pt}-\pt_t \pt_x^\al G_t(x-x^{*,\al})\Bigr)(-t^{*,\al})\,d\theta \Bigr)\|_{L^p(\R^n_x)} \\
&\leq |\M_k|\,\,t^{*,k} \int^1_0 \sum_{\al\in\Lambda_k(f)} \|\left(\pt_t\pt_x^\al G_t\right)_{t-\theta t^{*,\al}}(x-x^{*,\al})-\pt_t \pt_x^\al G_t(x-x^{*,\al})\|_{L^p(\R^n_x)}\,d\theta,
\end{align*}
where $ |\M_k|=\max_{\al \in \Lambda_k(f)}|\M_\al(f)|$ and $t^{*,k}=\max\,\{\,t^{*,\alpha}, 0\,\,|\, \alpha\in \Lambda_k(f)\}$.
By the similar way in deriving (\ref{eqn:3.2.1}), we have 
\begin{align*}
&\sum_{\al\in\Lambda_k(f)}\|\left(\pt_t \pt_x^\al G_t\right)_{t-\theta t^{*,\al}}(x-x^{*,\al})-\pt_t \pt_x^\al G_t(x-x^{*,\al})\|_{L^p(\R^n_x)} \\
&= \sum_{\al\in\Lambda_k(f)} \Biggl(\int_{\R^n_x}\biggl|(t-\theta t^{*,\al})^{-\frac{k+2}{2}-\frac{n}{2}}P_{1,\al}\left(\frac{x}{(t-\theta t^{*,\al})^{\frac{1}{2}}}\right)\e\left(-\frac{|x|^2}{4(t-\theta t^{*,\al})}\right) \hspace{300pt} \\
&\hspace{210pt} -t^{-\frac{k+2}{2}-\frac{n}{2}}P_{1,\al}\left(\frac{x}{t^{\frac{1}{2}}}\right)\e\left(-\frac{|x|^2}{4t}\right)\biggr|^pdx\Biggr)^{\frac{1}{p}}  \\
&\leq t^{-\frac{k+2}{2}-\frac{n}{2}(1-\frac{1}{p})}\sum_{\al\in\Lambda_k(f)}\|\biggl(1-\theta \frac{t^{*,\al}}{t}\biggr)^{-\frac{n+2}{2}}\pt_t\pt_x^\al G_1\biggl(z\Bigl(1-\theta\frac{t^{*,\al}}{t}\Bigr)^{-\frac{1}{2}}\biggr)-\pt_t\pt_x^\al G_1(z)\|_{L^p(\R^n_z)}.
\end{align*}
On the other hand, it is not difficult to see that  
$$
\lim_{t \to \infty}\biggl(\sum_{\al\in\Lambda_k(f)}\|\Bigl(1-\theta \frac{t^{*,\al}}{t}\Bigr)^{-\frac{n+2}{2}}\pt_t\pt_x^\al G_1\biggl(
z\Bigl(1-\theta\frac{t^{*,\al}}{t}\Bigr)^{-\frac{1}{2}}\biggr)-\pt_t\pt_x^\al G_1(z)\|_{L^p(\R^n_z)}\biggr) = 0
$$
for fixed $\theta\in [0,1]$. Thus, we reach 
\begin{align*}
\lim_{t \to \infty}t^{\frac{k+2}{2}+\frac{n}{2}(1-\frac{1}{p})} J_4(t)= 0,
\end{align*}
so that 
\begin{align}\label{eqn:3.4.1}
\lim_{t \to \infty}t^{\frac{k+2}{2}+\frac{n}{2}(1-\frac{1}{p})} \|\hat{R}_k(x,t)\|_{L^p(\R^n_x)} = 0.
\end{align}
Now we can conclude from (\ref{eqn:3.3.1}), (\ref{eqn:3.4.1}) that
\begin{align*}
\lim_{t \to \infty}t^{\frac{k+2}{2}+\frac{n}{2}(1-\frac{1}{p})}\|u(\cdot,t)-\Gk(x)\|_{L^p(\R^n_x)} = 0,
\end{align*}
which is the desired estimate (\ref{eqn:1.16}) of Theorem 1.5. 

Finally, we consider the case when $\int_{\R^n} (1+|y|)^{k+3}|f(y)|\,dy < +\infty$. In this case, we have 
\begin{align*}
\|\sum_{|\al|=k+2}\pt^\al_xG_t(x-y\theta)-\pt^\al_x G_t(x)\|_{L^p(\R^n_x)}  
&=\|\sum_{|\al|=k+2}\pt^\al_x \int^1_0 \sum_{|\beta|=1} \pt^\beta_x G_t(x-y\theta \phi)(-y)^\beta \,d\phi \|_{L^p(\R^n_x)} \\
&\leq |y|\int^1_0\sum_{|\al|=k+3} \| \pt^\al_x G_t(x-y\theta \phi) \|_{L^p(\R^n_x)}\,d\phi  \\
&\leq |y|\,\,t^{-\frac{k+3}{2}-\frac{n}{2}(1-\frac{1}{p})}\sum_{|\al|=k+3} \| \pt^\al_xG_1(z) \|_{L^p(\R^n_z)}  ,
\end{align*}
thereby  
\begin{align*}
\|\int_{\R^n}\,R_{k+2} (G_t)(x,y) \,f(y) \,dy\|_{L^p(\R^n_x)} \leq C\int_{\R^n}|y|^{k+3}|f(y)|\,dy \cdot t^{-\frac{k+3}{2}-\frac{n}{2}(1-\frac{1}{p})}.
\end{align*}
While $J_4(t)$ can be estimated as 
\begin{align*}
J_4(t)\leq C\,t^{-\frac{k+4}{2}-\frac{n}{2}(1-\frac{1}{p})}\;\;\mbox{for}\;t>t^{*,k},
\end{align*}
because 
\begin{multline*}
\sum_{\al\in\Lambda_k(f)}\|\left(\pt_t \pt_x^\al G_t\right)_{t-\theta t^{*,\al}}(x-x^{*,\al})-\pt_t \pt_x^\al G_t(x-x^{*,\al})\|_{L^p(\R^n_x)}\\
\leq \int_0^1\sum_{\al\in\Lambda_k(f)} \|\pt_t^2 \pt_x^\al G_{t-t^{*,\al}\theta \phi}(x) (-t^{*,\al}\theta) \|_{L^p(\R^n_x)}d\phi 
\end{multline*}
\begin{align*}
&\leq t^{*,k} \, t^{-\frac{k+4}{2}-\frac{n}{2}(1-\frac{1}{p})}\int_0^1\sum_{\al\in\Lambda_k(f)}\|\Bigl(1-\theta\phi \frac{t^{*,\al}}{t}\Bigr)^{-\frac{k+n+4}{2}}\\
&\hspace{100pt} \times P_{2,\al}\Bigl(z \bigl(1-\theta \phi \frac{t^{*,\al}}{t}\bigr)^{-\frac{1}{2}}\Bigr)\e\Bigl(-\frac{|z (1-\theta\phi\frac{t^{*,\al}}{t})^{-\frac{1}{2}}|^2}{4}\Bigr)\|_{L^p(\R^n_z)}d\phi\\
&\leq C\,t^{-\frac{k+4}{2}-\frac{n}{2}(1-\frac{1}{p})} \;\; \mbox{for} \; t>t^{*,k}.
\end{align*}
Combining the decay estimates for $J_1(t)$, $J_2(t)$ and $J_3(t)$ corresponding to this case, we reach the estimate
\begin{align*}
\|u(\cdot,t)-\Gk(\cdot)\|_{L^p(\R^n_x)} \leq C\,\,t^{-\frac{k+3}{2}-\frac{n}{2}(1-\frac{1}{p})} \;\; \mbox{for} \; t>t^{*,k},
\end{align*}
which is just the estimate  (\ref{eqn:1.9}) in Remark 1.2 (i). 

\medskip

$\bf{Proof \,of \,Theorem \,1.10.}$
We next give the proof of Theorem 1.10, in which that of Theorem 1.3 is included as the case when $k=0$. 
The following two elementary lemmas play an important role in the proof of Theorem 1.10. The first lemma is 
\begin{lemma}\label{lemma3}
Let $\alpha=(\alpha_1,\dots,\alpha_n)\in (\N_0)^n$ and let $\omega=(\omega_1,\dots,\omega_n)\in S^{n-1}$. If at least 
one component of 
$\alpha$ is odd, then 
$$
\int_{S^{n-1}}\,\omega^\alpha d\omega=0
$$
holds. 
On the other hand, if all components of $\alpha$ are even, that is, $\alpha=(2m_1,\dots,2m_n)$ with $m_i\in \N_0, \,\, i=1,\dots,n$, then  
\begin{eqnarray}
\displaystyle 
 \int_{S^{n-1}}\,\omega^{2m} d\omega&=& 2\,\frac{\,\,\prod^n_{i=1}\,\Gamma(m_i+\frac{\,1\,}{2})\,\,}{\,\Gamma(|m|+\frac{\,n\,}{2})\,}\nonumber\\
 \nonumber
\end{eqnarray}
holds. Here $\omega^\alpha={\omega_1}^{\alpha_1}\dots {\omega_n}^{\alpha_n}$, 
$|m|=\sum^n_{i=1} m_i$, and  $[k]_{\geq 1}=\max \{k,\,1\}$ for an integer $k$.
\end{lemma}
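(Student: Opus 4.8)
The plan is to establish the two assertions separately: the vanishing statement by a reflection symmetry of the sphere, and the explicit value by evaluating a single Gaussian integral over $\R^n$ in two different ways.

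For the vanishing statement, suppose the $i_0$-th component $\alpha_{i_0}$ of $\alpha$ is odd. The map $\omega=(\omega_1,\dots,\omega_{i_0},\dots,\omega_n)\mapsto(\omega_1,\dots,-\omega_{i_0},\dots,\omega_n)$ is an isometry of $\R^n$ that carries $S^{n-1}$ onto itself and leaves the surface measure $d\omega$ invariant, while it sends $\omega^\alpha$ to $-\omega^\alpha$ precisely because $\alpha_{i_0}$ is odd. Carrying out this change of variables in $\int_{S^{n-1}}\omega^\alpha\,d\omega$ shows that this integral equals its own negative, hence is $0$.

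For the explicit value when $\alpha=2m=(2m_1,\dots,2m_n)$, I would compute $\int_{\R^n}x^{2m}e^{-|x|^2}\,dx$ in two ways. First, by Fubini's theorem this integral factors as $\prod_{i=1}^n\int_\R s^{2m_i}e^{-s^2}\,ds$, and the one-dimensional identity $\int_\R s^{2\ell}e^{-s^2}\,ds=\Gamma(\ell+\tfrac12)$ (substitute $u=s^2$) gives the value $\prod_{i=1}^n\Gamma(m_i+\tfrac12)$. Second, passing to polar coordinates $x=r\omega$ with $r=|x|\geq 0$, $\omega\in S^{n-1}$, $dx=r^{n-1}\,dr\,d\omega$, one has $x^{2m}=r^{2|m|}\omega^{2m}$, so the integral separates as $\bigl(\int_{S^{n-1}}\omega^{2m}\,d\omega\bigr)\bigl(\int_0^\infty r^{2|m|+n-1}e^{-r^2}\,dr\bigr)$, and the radial factor equals $\tfrac12\Gamma(|m|+\tfrac n2)$ (again substitute $u=r^2$). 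Equating the two evaluations and solving for $\int_{S^{n-1}}\omega^{2m}\,d\omega$ produces the stated formula.

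No genuine obstacle is expected, as both arguments are entirely elementary. The only points requiring a little care are: justifying Fubini's theorem and the polar change of variables via the absolute integrability of $x\mapsto x^{2m}e^{-|x|^2}$ on $\R^n$; noting that the spherical integrand $\omega\mapsto\omega^{2m}$ is continuous on the compact manifold $S^{n-1}$, so the surface integral is well defined and finite; and keeping the bookkeeping of the Gamma-function evaluations $\int_0^\infty u^{a-1}e^{-u}\,du=\Gamma(a)$ of the two reduced one-dimensional integrals straight.
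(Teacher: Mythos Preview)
Your proof is correct and takes a genuinely different route from the paper. The paper parameterizes $S^{n-1}$ explicitly by angles $\theta_1,\dots,\theta_{n-1}$, writes $\int_{S^{n-1}}\omega^{2m}\,d\omega$ as a product of one-dimensional integrals of the form $\int_0^\pi \sin^p\theta\cos^q\theta\,d\theta$ (with the last angle ranging over $[0,2\pi]$), evaluates each via the Beta function $B(\tfrac{p+1}{2},\tfrac{q+1}{2})$, and then observes a telescoping cancellation among the resulting Gamma factors to reach the stated formula; the odd-component case is handled within the same parameterization by noting that an odd exponent on the relevant cosine factor makes the corresponding one-dimensional integral vanish. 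Your argument bypasses all of this: the reflection $\omega_{i_0}\mapsto -\omega_{i_0}$ gives the vanishing immediately, and the Gaussian trick of computing $\int_{\R^n}x^{2m}e^{-|x|^2}\,dx$ once in Cartesian and once in polar coordinates yields the explicit value with essentially no bookkeeping. Your approach is shorter and more conceptual; the paper's has the minor virtue that both assertions emerge from a single explicit spherical-coordinate computation, but at the cost of the parameterization and the telescoping step.
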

\noindent The second lemma concerns an explicit form of the spatial derivatives of the heat kernel. Since $G_t(x)$ 
depends on $x$ through $r=|x|$, we use the notation that $G_t(x)=\tilde{G}_t(r)$.
\begin{lemma}\label{lemma4}
For the same $\alpha$ and $\omega$ as in the above lemma, it holds that 
\begin{equation}\label{eqn:3.2}\displaystyle 
\partial_x^\alpha G_t(x)=\left(-\frac{r}{\,2t\,}\right)^{|\alpha|}\tilde{G}_t(r)\,\omega^\alpha+
\sum_{\beta\in K(\alpha)\setminus\{0\}} C_{\alpha, \beta} \,\omega^{\alpha(\beta)}\\,\,\mbox{for \, all}\,\,r=|x|>0,\,t>0, 
\end{equation}
where $\beta \in(\N_0)^n$ are multi-indices and the set $K(\alpha)$ is defined by 
$$
K(\alpha)=\{\,\beta=(\beta_1,\dots,\beta_n)\in(\N_0)^n\,|\, 0\leq \beta_i \leq [\alpha_i/2],\,i=1,\dots,n\,\}
$$ 
and $\alpha(\beta)=(\alpha_1-2\beta_1,\dots, \alpha_n-2\beta_n).$  The coefficient $C_{\alpha,\beta}$ consists of the term  
$\displaystyle \left(-\frac{r}{\,2t\,}\right)^{|\alpha|-|\beta|}\frac{1}{\,r^{|\beta|}}\tilde{G}_t(r)$ multiplied by a constant depending only on 
$\alpha$ and $\beta$.
\end{lemma}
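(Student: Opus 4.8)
The plan is to reduce everything to a one–dimensional computation, using the fact that the Gaussian factors over the coordinates. Since $G_t(x)=(4\pi t)^{-n/2}\prod_{i=1}^n e^{-x_i^2/(4t)}$ and $\partial_x^\alpha=\prod_{i=1}^n\partial_{x_i}^{\alpha_i}$, one has
\[
\partial_x^\alpha G_t(x)=\Big(\prod_{i=1}^n q_{\alpha_i}(x_i)\Big)\,G_t(x),
\]
where $q_m$ is the polynomial defined by $\partial_x^m e^{-x^2/(4t)}=q_m(x)\,e^{-x^2/(4t)}$. So the first step is to pin down the structure of $q_m$. Starting from $\partial_x e^{-x^2/(4t)}=-\tfrac{x}{2t}e^{-x^2/(4t)}$, the polynomials $q_m$ obey $q_0\equiv 1$ and $q_{m+1}=q_m'-\tfrac{x}{2t}\,q_m$. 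An induction on $m$ then shows
\[
q_m(x)=\sum_{j=0}^{[m/2]}a_{m,j}\,t^{-m+j}\,x^{\,m-2j}
\]
with constants $a_{m,j}$ independent of $t$ and $x$ and with $a_{m,0}=(-1/2)^m$: applying the recursion to a summand $a_{m,j}t^{-m+j}x^{m-2j}$ produces a multiple of $t^{-(m+1)+j}x^{(m+1)-2j}$ (from $-\tfrac{x}{2t}q_m$) and a multiple of $t^{-(m+1)+(j+1)}x^{(m+1)-2(j+1)}$ (from $q_m'$), which preserves the displayed normal form and forces $a_{m+1,0}=-\tfrac12a_{m,0}$. (These $q_m$ are Hermite polynomials up to rescaling, but only the normal form above is needed.)

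Next I would multiply out and pass to polar coordinates. Taking the product over $i$,
\[
\prod_{i=1}^n q_{\alpha_i}(x_i)=\sum_{\beta\in K(\alpha)}\Big(\prod_{i=1}^n a_{\alpha_i,\beta_i}\Big)\,t^{-|\alpha|+|\beta|}\,x^{\alpha-2\beta},
\]
since each factor $q_{\alpha_i}$ contributes exactly one exponent $\beta_i$ with $0\le\beta_i\le[\alpha_i/2]$, so the multi–indices that occur are precisely $\beta\in K(\alpha)$, and $x^{\alpha-2\beta}=x^{\alpha(\beta)}$. For $r=|x|>0$ and $\omega=x/r\in S^{n-1}$ one has $x^{\alpha(\beta)}=r^{|\alpha(\beta)|}\omega^{\alpha(\beta)}=r^{|\alpha|-2|\beta|}\omega^{\alpha(\beta)}$, whence
\[
\partial_x^\alpha G_t(x)=\sum_{\beta\in K(\alpha)}\Big(\prod_{i=1}^n a_{\alpha_i,\beta_i}\Big)\,t^{-|\alpha|+|\beta|}\,r^{|\alpha|-2|\beta|}\,\omega^{\alpha(\beta)}\,\tilde{G}_t(r).
\]
Finally I would isolate the term $\beta=0$: there $\prod_i a_{\alpha_i,0}=(-1/2)^{|\alpha|}$, $\alpha(0)=\alpha$, and $(-1/2)^{|\alpha|}t^{-|\alpha|}r^{|\alpha|}=(-r/2t)^{|\alpha|}$, which gives the main term $(-r/2t)^{|\alpha|}\tilde{G}_t(r)\,\omega^\alpha$; for $\beta\in K(\alpha)\setminus\{0\}$ I would rewrite $t^{-|\alpha|+|\beta|}r^{|\alpha|-2|\beta|}=(-2)^{|\alpha|-|\beta|}\big(-r/2t\big)^{|\alpha|-|\beta|}r^{-|\beta|}$, so that the $\beta$–summand equals $C_{\alpha,\beta}\,\omega^{\alpha(\beta)}$ with
\[
C_{\alpha,\beta}=\Big[(-2)^{|\alpha|-|\beta|}\prod_{i=1}^n a_{\alpha_i,\beta_i}\Big]\cdot\Big(-\frac{r}{2t}\Big)^{|\alpha|-|\beta|}\frac{1}{r^{|\beta|}}\,\tilde{G}_t(r),
\]
the bracketed factor depending only on $\alpha$ and $\beta$ — exactly the asserted description of $C_{\alpha,\beta}$.

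No step here is deep; the only point needing care is the bookkeeping of the powers of $t$ and $r$, so that they recombine into the stated $(-r/2t)^{|\alpha|-|\beta|}r^{-|\beta|}$ shape, together with the harmless remark that summands whose constant coefficient happens to vanish simply drop out — this is consistent with the statement, since $C_{\alpha,\beta}$ is only claimed to be a constant multiple of the displayed expression. An alternative would be a direct induction on $|\alpha|$ using $\partial r/\partial x_\ell=\omega_\ell$ and $\partial\omega_j/\partial x_\ell=(\delta_{j\ell}-\omega_j\omega_\ell)/r$, but differentiating in the angular variables is messier than the product-over-coordinates argument above, so I would not take that route.
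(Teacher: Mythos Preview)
Your argument is correct and is essentially the same route the paper takes: both factorize $G_t(x)$ over the coordinates and read off the structure of $\partial_x^\alpha G_t$ from the one--variable polynomials multiplying the Gaussian, then pass to polar coordinates. The only cosmetic difference is that the paper names these polynomials as Hermite polynomials, writes $\partial_x^\alpha G_t(x)=\tilde G_t(r)\,(-1/2\sqrt t)^{|\alpha|}\prod_i H_{\alpha_i}(x_i/2\sqrt t)$, and then quotes the classical closed form $H_n(x)=(2x)^n-\tfrac{n(n-1)}{1!}(2x)^{n-2}+\cdots$ from Courant--Hilbert, whereas you rederive exactly this monomial structure (your $q_m$) by the recursion $q_{m+1}=q_m'-\tfrac{x}{2t}q_m$; your remark that the $q_m$ are rescaled Hermite polynomials is precisely the link. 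Your bookkeeping of the powers of $t$ and $r$ into $(-r/2t)^{|\alpha|-|\beta|}r^{-|\beta|}$ is the step the paper leaves as ``a straightforward calculation.''
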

\noindent The proof of Lemmas 3.2 and 3.3 is given in Appendix A.

Instead of (\ref{eqn:2.1}), we use here the following higher order expansion of the heat solution $u(x,t)$ 
given by  (\ref{eqn:1.3}): 
\begin{multline}\label{eqn:3.7.1}
u(x,t)= \int_{\R^n} G_t(x-y) f(y)\,dy  \\
    	= \sum^{k+3}_{i=0} \frac{1}{\,i!\,}\int_{\R^n}\, L_n(D)^i\,G_t(x)\,f(y)\,dy+\int_{\R^n}\,R_{k+3} (G_t)(x,y)\,f(y) \,dy \hspace{20pt} \\
	=\sum_{|\alpha|\leq k-1}\frac{\,(-1)^{|\alpha|}\,}{\alpha!}\M_\alpha(f)\partial^\alpha_x G_t(x) +\sum_{\alpha \in \Lambda_k(f) }\frac{\,(-1)^{|\alpha|}\,}{\alpha!}\M_\alpha(f)\partial^\alpha_x G_t(x) 	\hspace{10pt} \\
+\frac{1}{\,(k+1)!\,}\int_{\R^n}\, L_n(D)^{k+1}\,G_t(x)\,f(y)\,dy+\frac{1}{\,(k+2)!\,}\int_{\R^n}\, L_n(D)^{k+2}\,G_t(x)\,f(y)\,dy \\
+\sum_{|\alpha|=k+3} \frac{\,(-1)^{|\alpha|}\,}{\alpha!}\M_\alpha(f)\partial^\alpha_x G_t(x)+\int_{\R^n}\,R_{k+3} (G_t)(x,y) \,f(y) \,dy.
\end{multline}
Similarly, instead of  (\ref{eqn:2.4}), we employ an expansion of the $k$-th order modified heat kernel $\Gk(x)$ in 
(\ref{eqn:1.12}) such that
\begin{multline}\label{eqn:3.8.1}
\Gk(x)=\Gk(x; x^*,t^*)  \\ 
	=\displaystyle \sum_{0\leq|\alpha|\leq k-1}\frac{\,(-1)^{|\alpha|\,}}{\alpha!} \M_\alpha(f)\partial_x^\alpha G_t(x)
	+\sum_{\alpha \in \Lambda_k(f)}\frac{\,(-1)^{|\alpha|\,}}{\alpha!} \M_\alpha(f)\partial_x^\alpha G_t(x) \hspace{70pt} \\
-\displaystyle \sum_{\alpha \in \Lambda_k(f)}\sum^n_{i=1}\frac{\,(-1)^k\,}{\alpha!} \M_\alpha(f) x^{*,\alpha}_i
\partial_x^{\alpha+e_i} G_t(x)  
+\displaystyle \sum_{\alpha \in \Lambda_k(f)}\sum_{|\alpha'|
=2}\frac{(-1)^k}{\alpha!}\frac{\,1\,}{\alpha' !}\, \M_{\alpha}(f) (x^{*,\alpha})^{\alpha'} \partial_x^{\alpha+\alpha'}G_t(x) \\
-\displaystyle 
\sum_{\alpha \in \Lambda_k(f)}\sum_{i=1}^n \frac{(-1)^k}{\alpha!}\M_\alpha(f) t^{*,\alpha} \,\pt^2_{x_i}\pt^{\al}_x G_t(x)
+ \sum_{\alpha \in \Lambda_k(f)}\sum_{i=1}^n\sum_{j=1}^n\frac{(-1)^k}{\alpha!}
\M_\alpha(f)\,x_j^{*,\al}t^{*,\al}\pt^2_{x_i}\pt_x^{\al+e_j}G_t(x)+\hat{R}'_k(x,t). 
\end{multline}
Here
\begin{equation}\label{eqn:3.9.1}
\hat{R}'_k(x,t)=\displaystyle \sum_{\al \in \Lambda_k(f)}\frac{(-1)^k}{\,\alpha!\,}\M_{\alpha}(f)\tilde{R}'_\alpha(x,t),
\end{equation}
where
\begin{multline*}
\tilde{R}'_\alpha(x,t)=R_2(\partial^\alpha_x G_t)(x,x^{*,\alpha})+R_1(\partial_t\partial^\alpha_x G_t)(x,x^{*,\alpha})(-t^{*,\alpha})+\pt_t^2\pt^\al_xG_t(x)(t^{*,\al})^2 \\ 
- \sum_{i=1}^n \pt_t^2 \pt^{\al+e_i}_xG_t(x)x^{*,\al}_i(t^{*,\al})^2+ R_1\left(\pt_t^2\pt^\al_xG_t\right)(x,x^{*,\al})(t^{*,\al})^2+T_2(\partial^\alpha_xG_t)(x-x^{*,\alpha},t,t^{*,\alpha}).
\end{multline*}
Taking the polar coordinates $x=r\omega$ where $r=|x|$, $\displaystyle \omega=\frac{x}{\,|x|\,}\in S^{n-1}$, for all 
$x\in \R^n\setminus \{0\}$, we can see from (\ref{eqn:3.7.1}), (\ref{eqn:3.8.1}) that the integration over $S^{n-1}$ of the error 
term of the heat solution $u(x,t)$ for $\Gk(x)$ is given by
\begin{multline}\label{eqn:3.10.1}
\int_{S^{n-1}}u(r\omega,t)\, d\omega-\int_{S^{n-1}}\Gk(r\omega)\, d\omega \\
=\sum_{i=1}^4\int_{S^{n-1}} I_i\, d\omega +\int_{S^{n-1}}\Bigg\{ \int_{\R^n}\,R_{k+3} (G_t)(r\omega,y) \,f(y) \,dy-\hat{R}'_k(r\omega,t)\Bigg\}d\omega \,\,\,\mbox{for\,all}\,$t>0$,\, $r>0$,
\end{multline}
where the terms $I_1$, $I_2$, $I_3$ are the same as (\ref{eqn:2.7}), (\ref{eqn:2.8}), (\ref{eqn:2.9}), and 
\begin{multline*}
I_4=\sum_{|\alpha|=k+3} \frac{(-1)^{k+3}}{\alpha!}\M_{\alpha}(f) \partial_x^{\alpha}G_t(x)  - 
\sum_{\alpha \in \Lambda_k(f)}\,
\sum_{i=1,j}^n\,\frac{(-1)^k}{\alpha!}\M_{\alpha}(f)\,x_j^{*,\alpha}\,\pt_{x_i}^2\partial_x^{\alpha+e_j}G_t(x). 
\end{multline*}

We first have a look at the case when $k$ is even. 
Since the initial data $f$ satisfies (\ref{eqn:1.a}) in Condition A and the time shifts $t^{*,\alpha}$, $\alpha\in\Lambda_k(f)$, are 
given by (\ref{eqn:1.b}), we find that $I_2=0$. 
On the other hand,  remembering the assumption that 
$\M_{\alpha+\alpha'}(f)=0$ for all $\alpha\notin \Lambda_k(f)$ with $|\alpha|=k$ and all $\alpha'\in(\N_0)^n$ with $|\alpha'|=2$ 
in (\ref{eqn:1.c}), thanks to Lemmas \ref{lemma3} and \ref{lemma4}, we can see that the terms $I_1$, $I_3$ and $I_4$ obey 
$$
\displaystyle \int_{S^{n-1}}\,I_1\,d\omega=\int_{S^{n-1}}\, I_3 \,d\omega=\int_{S^{n-1}}\, I_4 \,d\omega=0.
$$
In particular, we notice the fact that $\int_{S^{n-1}}\,I_1\,d\omega=0$ holds for arbitrary spatial shifts $x^{*,\alpha}$, 
$\alpha\in\Lambda_k(f)$, when $k$ is even. In fact, since all derivatives of $G_t(x)$ with respect to $x$ appearing in $I_1$ are 
those of odd orders when $k$ is even, Lemmas 3.2 and 3.3  readily yield the fact above. 
Consequently, we have
\begin{multline}\label{eq:3.11.1}
\|[u(\cdot, t))]-[\Gk(\cdot;x^*,t^*)]\,\|_{L^p(\R^n)}\\
\hspace{95pt}=\|\int_{S^{n-1}}\left\{ \int_{\R^n}\,R_{k+3}(G_t)(r\omega,y)\,f(y)\,dy-{\hat{R}_k}'(r\omega,t) \right\} \,d\omega\|_{L^p(\R^n)} \\
\leq \bigg\{\|  \int_{\R^n}\,R_{k+3}(G_t)(x,y)\,f(y)\,dy\|_{L^p(\R^n)}+\|{\hat{R}_k}'(x,t) \|_{L^p(\R^n)}\bigg\}|S^{n-1}|.
\end{multline}
By the same way as in (\ref{eqn:3.1.1}), (\ref{eqn:3.2.1}),  we obtain 
\begin{multline}\label{eqn:3.12.1}
\|\int_{\R^n}\,R_{k+3} (G_t)(x,y) \,f(y) \,dy\|_{L^p(\R^n)}  \\
\leq (k+3)\int^1_0 \int_{\R^n} \sum_{|\al|=k+3} \| \pt^\al_xG_t(x-y\theta)-\pt^\al_x G_t(x)  \|_{L^p(\R^n_x)}\, |y|^{k+3}|f(y)| \, dy d\theta, \\
\end{multline}
and 
\begin{multline}\label{eqn:3.13.1}
\sum_{|\al|=k+3}\|\pt^\al_xG_t(x-y\theta)-\pt^\al_x G_t(x)\|_{L^p(\R^n_x)}  \\
= t^{-\frac{k+3}{2}-\frac{n}{2}}\sum_{|\al|=3}\Biggl(\int_{\R^n_x}\biggl|P_{0,\al}\biggl(\frac{x-y\theta}{t^{\frac{1}{2}}}\biggr)\e\biggl(-\frac{|x-y\theta|^2}{4t}\biggr) 
-P_{0,\al}\biggl(\frac{x}{t^{\frac{1}{2}}}\biggr)\e\biggl(-\frac{|x|^2}{4t}\biggr)\biggr|^p dx\Biggr)^{\frac{1}{p}} \hspace{30pt} \\
= t^{-\frac{k+3}{2}-\frac{n}{2}}\sum_{|\al|=k+3}\Biggl( \int_{\R^n_z} \biggl|P_{0,\al}\biggl(z-t^{-\frac{1}{2}}y\theta\biggr)\e\biggl(-\frac{|z-t^{-\frac{1}{2}}y\theta|^2}{4}\biggr) -P_{0,\al}(z)\e\biggl(-\frac{|z|^2}{4}\biggr)\biggr|^p t^{\frac{n}{2}} dz\Biggr)^{\frac{1}{p}}  \\
\leq C t^{-\frac{k+3}{2}-\frac{n}{2}(1-\frac{1}{p})}\sum_{|\al|=k+3}\|\pt^\al_x G_1\left(z-t^{-\frac{1}{2}}y\theta\right)-\pt^\al_x G_1(z)\|_{L^p(\R^n_z)}. \hspace{150pt}
\end{multline}
It follows from (\ref{eqn:3.12.1}), (\ref{eqn:3.13.1}) that
\begin{align*}
\|\int_{\R^n}R_{k+3}(G_t)(x,y)f(y)\,dy\|_{L^p(\R^n)} \leq C\,t^{-\frac{k+3}{2}-\frac{n}{2}(1-\frac{1}{p})}\int^1_0\left(\int_{\R^n}\varphi^{(k+3)}_t(y,\theta)|y|^{k+3}|f(y)|\,dy\right)\,d\theta. 
\end{align*}
where $\varphi^{(k+3)}_t(y,\theta)$ is the function defined by (\ref{eqn:varphi}) with $\ell=k+3$.
Since the functions $\varphi^{(k+3)}_t(y,\theta)$ is bounded in $t$ , $\theta$ and $y$, and $|y|^{k+3}|f(y)|$ is integrable on $\R^n$, 
it is seen that
\begin{align*}
\lim_{t \to \infty}\int^1_0\left(\int_{\R^n}\varphi^{(k+3)}_t(y,\theta)|y|^{k+3}|f(y)|\,dy\right)\,d\theta =0,
\end{align*}
thereby
\begin{align}\label{eqn:3.14.1}
\lim_{t \to \infty}t^{\frac{k+3}{2}+\frac{n}{2}(1-\frac{1}{p})}\|\int_{\R^n}R_{k+3}(G_t)(x,y)f(y)\,dy\, \|_{L^p(\R^n)} = 0.
\end{align}
We next estimate the term $\|\hat{R'_k}(x,t)\|_{L^p(\R^n)}$ . In view of (\ref{eqn:3.9.1}), we have
\begin{align}\label{eqn:3.16.1}
\|\hat{R'_k}(x,t)\|_{L^p(\R^n)} \leq \tilde{J_1}(t)+\tilde{J_2}(t)+\tilde{J_3}(t)+\tilde{J_4}(t)+\tilde{J_5}(t)+\tilde{J_6}(t),
\end{align}
where
\begin{align*}
\tilde{J_1}(t)&=\|\sum_{\al \in \Lambda_k(f)}\frac{(-1)^k}{\,\alpha!\,}\M_{\alpha}(f)R_2(\partial^\alpha_x G_t)(x,x^{*,\alpha})\|_{L^p(\R^n)} ,\\
\tilde{J_2}(t)&=\|\sum_{\al \in \Lambda_k(f)}\frac{(-1)^k}{\,\alpha!\,}\M_{\alpha}(f)t^{*,\alpha}R_1(\partial_t\partial^\alpha_x G_t)(x,x^{*,\alpha})\|_{L^p(\R^n)} ,\\
\tilde{J_3}(t)&=\|\sum_{\al \in \Lambda_k(f)}\frac{(-1)^k}{\,\alpha!\,}\M_{\alpha}(f)(t^{*,\al})^2\pt_t^2\pt^\al_xG_t(x)\|_{L^p(\R^n)} ,\\
\tilde{J_4}(t)&=\|\sum_{\al \in \Lambda_k(f)}\frac{(-1)^k}{\,\alpha!\,}\M_{\alpha}(f)\,x^{*,\al}_i(t^{*,\al})^2\sum_{i=1}^n \pt_t^2 \pt^{\al+e_i}_xG_t(x)\|_{L^p(\R^n)} , \hspace{120pt} \\
\tilde{J_5}(t)&= \|\sum_{\al \in \Lambda_k(f)}\frac{(-1)^k}{\,\alpha!\,}\M_{\alpha}(f)(t^{*,\al})^2R_1\left(\pt_t^2\pt^\al_xG_t\right)(x,x^{*,\al})\|_{L^p(\R^n)}, \\
\mbox{and}\hspace{80pt}&\\
\tilde{J_6}(t)&=\|\sum_{\al \in \Lambda_k(f)}\frac{(-1)^k}{\,\alpha!\,}\M_{\alpha}(f)T_2(\partial^\alpha_xG_t)(x-x^{*,\alpha},t,t^{*,\alpha})\|_{L^p(\R^n)} .
\end{align*}
Since the time decay estimates for $\tilde{J_1}(t)$, $\tilde{J_2}(t)$, $\tilde{J_3}(t)$, $\tilde{J_4}(t)$ and $\tilde{J_5}(t)$ can be 
shown by applying Lemma 3.1 directly, we here give only the estimate for $\tilde{J_6}(t)$.
In view of the definition $T_2(\partial^\alpha_x G_t)$ appearing below (\ref{eqn:2.5.1}), we have
\begin{multline*}
\tilde{J_6}(t)=\|\sum_{\al \in \Lambda_k(f)}\frac{(-1)^k}{\,\alpha!\,}\M_{\alpha}(f)T_2(\partial^\alpha_xG_t)(x-x^{*,\alpha},t,t^{*,\alpha})\|_{L^p(\R^n)} \\
\leq 2\,|\M_k||t^{*,k}|^2\,\int^1_0\sum_{\al \in \Lambda_k(f)}\|\Bigl(\pt_t^2\pt_x^\al G_t\Bigr)_{t- t^{*,\al}\theta }(x-x^{*,\al})-\pt_t^2\pt_x^\al G_t(x-x^{*,\al})\|_{L^p(\R^n_x)}\,d\theta.
\end{multline*}
Since
\begin{multline*}
\sum_{\al \in \Lambda_k(f)}\|\Bigl(\pt_t^2\pt_x^\al G_t\Bigr)_{t- t^{*,\al}\theta }(x-x^{*,\al})-\pt_t^2\pt_x^\al G_t(x-x^{*,\al})\|_{L^p(\R^n_x)}\\
= t^{-\frac{k+4}{2}-\frac{n}{2}}\sum_{\al \in \Lambda_k(f)}\Bigg(\int_{\R^n_z}\Bigg|\left(1-\frac{t^{*,\al}}{t}\,\theta \right)^{-\frac{n+k+4}{2}}P_{2,\al}\left(z\left(1-\frac{t^{*,\al}}{t}\,\theta \right)^{-\frac{1}{2}}\right)\e\left(-\frac{|z\left(1-\frac{t^{*,\al}}{t}\,\theta \right)^{-\frac{1}{2}}|^2}{4}\right) \\
-P_{2,\al}(z)\e\left(-\frac{|z|^2}{4}\right)\Bigg|^p t^{\frac{n}{2}}dz\Bigg)^{\frac{1}{p}} \\
\leq t^{-\frac{k+4}{2}-\frac{n}{2}(1-\frac{1}{p})}\sum_{\al \in \Lambda_k(f)}\|\left(1-\frac{t^{*,\al}}{t}\,\theta \right)^{-\frac{n+k+4}{2}}\pt_t\pt_x^\al G_1\left(z\left(1-\frac{t^{*,\al}}{t}\,\theta \right)^{-\frac{1}{2}}\right)
-\pt_t\pt_x^\al G_1(z)\|_{L^p(\R^n_z)},
\end{multline*}
and since
\begin{align*}
\lim_{t \to \infty}\left(\|\left(1-\frac{t^{*,\al}}{t}\,\theta \right)^{-\frac{n+k+4}{2}}\pt_t\pt_x^\al G_1
\left(z \left(1-\frac{t^{*,\al}}{t}\,\theta \right)^{-\frac{1}{2}}\right)-\pt_t\pt_x^\al G_1(z)\|_{L^p(\R^n_z)}\right) = 0
\end{align*}
for fixed $\theta \in[0,1]$, $\alpha\in\Lambda_k(f)$, we obtain
\begin{align*}
\lim_{t \to \infty}t^{\frac{k+4}{2}+\frac{n}{2}(1-\frac{1}{p})}\tilde{J_6}(t) = 0,
\end{align*}
so that
\begin{align}\label{eqn:3.17.1}
\lim_{t \to \infty}t^{\frac{k+4}{2}+\frac{n}{2}(1-\frac{1}{p})}\|\hat{R'_k}(x,t)\|_{L^p(\R^n)} = 0.
\end{align}
Therefore, combining (\ref{eq:3.11.1}), (\ref{eqn:3.14.1}) and (\ref{eqn:3.17.1}), we can conclude that 
\begin{equation*}
\lim_{t \to \infty}t^{\frac{k+3}{2}+\frac{n}{2}(1-\frac{1}{p})}\|[u(\cdot, t))]-[\Gk(\cdot;x^*,t^*)]\,\|_{L^p(\R^n)}= 0
\end{equation*}
Taking $x^{*,\alpha}=0$ for all $\alpha\in\Lambda_k(f)$ specifically, we have the decay estimate (\ref{eqn:1.26}) of Theorem 1.10. 

\medskip

We turn to the case when $k$ is odd.
Since the spatial shifts $x^{*,\alpha}$, for all $\alpha\in\Lambda_k(f)$, are taken as in (\ref{eqn:1.13}), we can 
see that $I_1= 0$.  Whereas,  remembering the assumption that 
$\M_{\alpha+e_i}(f)=0$, for all $\alpha\notin \Lambda_k(f)$ with $|\alpha|=k$ and all $i=1,\dots,n$, by virtue of 
Lemmas \ref{lemma3} and \ref{lemma4}, we find that the terms $I_2$ in (\ref{eqn:2.8}) and $I_3$ in (\ref{eqn:2.9}) obey 
$$
\displaystyle \int_{S^{n-1}}\,I_2\,d\omega=\int_{S^{n-1}}\, I_3 \,d\omega=0.
$$
We notice that Lemmas 3.2 and 3.3  lead to $\int_{S^{n-1}}\,I_2\,d\omega=0$ for arbitrary time shifts 
$t^{*,\alpha}$, $\alpha\in\Lambda_k(f)$, when $k$ is odd. 
Therefore, returning to (\ref{eqn:2.6}), we have
\begin{multline*}
\displaystyle 
\int_{S^{n-1}}\,u(r\omega,t)\,d\omega-\int_{S^{n-1}}\,\hat{G}_t^{(k)}(r\omega)\,d\omega
=\int_{S^{n-1}}\Bigg\{\int_{\R^n}\,R_{k+2}(G_t)(r\omega,y)\,f(y)\,dy-\hat{R}_k(r\omega,t)\Bigg\}d\omega,
\end{multline*}
so that 
\begin{align*}
\|[u(\cdot,t)-[\Gk(\cdot;x^*,t^*)]\,\|_{L^p(\R^n)}
&=\|\int_{S^{n-1}}\Bigg\{\int_{\R^n}\,R_{k+2}(G_t)(r\omega,y)\,f(y)\,dy-\hat{R}_k(r\omega,t)\Bigg\}\,d\omega\|_{L^p(\R^n)}\\
&\leq\bigg\{  \|\int_{\R^n}\,R_{k+2}(G_t)(x,y)\,f(y)\,dy\|_{L^p(\R^n)} +\|\hat{R}_k(x,t)\|_{L^p(\R^n)}  \bigg\}|S^{n-1}|.
\end{align*}
Since the time decay estimates for $\|\int_{\R^n}\,R_{k+2}(G_t)(x,y)\,f(y)\,dy\|_{L^p(\R^n_x)}$, $\|\hat{R}_k(x,t)\|_{L^p(\R^n_x)}$ 
have been already given in the proof of Theorem 1.5, we conclude that
$$
\lim_{t \to \infty} t^{\frac{k+2}{2}+\frac{n}{2}(1-\frac{1}{p})}\|[u(\cdot)]-[\Gk(x; x^*,t^*)]\,\|_{L^p(\R^n)}=0.
$$
In the estimate (\ref{eqn:1.27}) in Theorem 1.10, the total time shift $t^{*}$ is taken as zero. 
We complete the proof of Theorem 1.10. 

\section{Appendices}

$\bf{Appendix\, A.\,\, Proof \,of \,Lemmas\,\, 3.2\, and\,\, 3.3.}$

We first give the proof of Lemma 3.2. 
By the polar coordinates $r, \theta_1,\dots, \theta_{n-1}$, the 
Cartesian coordinates $x_1,\dots,x_n$ in $\R^n$ are given by 
\begin{eqnarray*}
x_1&=&r\cos\theta_1,\\
x_2&=&r\sin\theta_1\cos\theta_2,\\
\phantom{x-x}&\vdots&\phantom{x-x}\\
x_{n-1}&=&r\sin\theta_1\dots \sin\theta_{n-2}\cos\theta_{n-1},\\
x_{n}&=&r\sin\theta_1\dots \sin\theta_{n-2}\sin\theta_{n-1},\\
\end{eqnarray*}
with $r>0$ and $\,0\leq \theta_1,\dots,\theta_{n-2}\leq \pi,\,0\leq \theta_{n-1}< 2\pi$. 
We first consider the case when all components of $\alpha$ are even, that is, $\alpha=(2m_1,
\dots, 2m_n)$, $m_i\in\N$, $i=1,\dots,n$. Since $\displaystyle \omega_i=\frac{\,x_i\,}{r}$, $i=1,\dots,n$, we have
\begin{multline}\label{eqn:1.17}
\int_{S^{n-1}}\omega^\alpha\,d\omega=\int^{2\pi}_0\int^{\pi}_0\cdots\int^{\pi}_0
\omega_1^{2m_1}\cdots \omega_n^{2m_n}\prod^{n-2}_{i=1}\,\sin^{n-1-i}\theta_i\,\,d\theta_1\cdots \theta_{n-1}\\
=\int^{2\pi}\,\sin^{2m_n}\theta_{n-1}\cos^{2m_{n-1}}\theta_{n-1}\,d\theta_{n-1}\int^\pi_0\sin^{2m_n+2m_{n-1}+1}\theta_{n-2}\cos^{2m_{n-2}}\theta_{n-2}
\,d\theta_{n-2}\\
\times \int^\pi_0\sin^{2m_n+2m_{n-1}+2m_{n-2}+2}\theta_{n-3}
\cos^{2m_{n-3}}\theta_{n-3}\,d\theta_{n-2}\cdots\int^\pi_0\sin^{2m_n+\cdots+2m_2+(n-2)}
\theta_1\cos^{2m_1}\theta_1\,d\theta_1.\\
\end{multline}
Since for $p\in\N$ and $q$ even, 
\begin{eqnarray*}\displaystyle 
\int^\pi_0 \sin^p x\cos^q x\,dx&=&2\int^{\frac{\,\pi\,}{2}}_0\sin^p x\cos^q x\,dx\\
&=&\displaystyle B(\frac{\,p+1\,}{2},\frac{\,q+1\,}{2})=\frac{\,\Gamma(\frac{\,p+1\,}{2})\,\Gamma(\frac{\,q+1\,}{2})\,}{\Gamma(\frac{\,p+q\,}{2}+1)},
\end{eqnarray*}
and for both $p$ and $q$ even, 
\begin{eqnarray*}\displaystyle 
\int^\pi_0 \sin^p x\cos^q x\,dx&=&4\int^{\frac{\,\pi\,}{2}}_0\sin^p x\cos^q x\,dx\\
&=&\displaystyle 2B(\frac{\,p+1\,}{2},\frac{\,q+1\,}{2})=2\frac{\,\Gamma(\frac{\,p+1\,}{2})\,\Gamma(\frac{\,q+1\,}{2})\,}{\Gamma(\frac{\,p+q\,}{2}+1)},
\end{eqnarray*}
we obtain from (\ref{eqn:1.17}) that
\begin{eqnarray*}
\int_{S^{n-1}}\omega^\alpha\,d\omega&=&2\frac{\,\Gamma(m_n+\frac{1}{2})\,\Gamma(m_{n-1}+\frac{1}{2})\,}{\Gamma(m_n+m_{n-1}+1)}
\frac{\,\Gamma(m_n+m_{n-1}+1)\,\Gamma(m_{n-2}+\frac{1}{2})\,}{\Gamma(m_n+m_{n-1}+m_{n-2}+\frac{\,3\,}{2})}\cdots \\ 
&\phantom{a}&\phantom{\qquad\qquad\qquad\qquad\qquad\qquad\qquad\qquad}\times \frac{\,\Gamma(m_n+\dots+m_2+\frac{\,n-1\,}{2})\,\Gamma(m_1+\frac{1}{2})\,}{\Gamma(m_n+\dots+m_1+\frac{\,n\,}{2})}\\
&=&2\frac{\prod^n_{i=1}\,\Gamma(m_i+\frac{\,1\,}{2})\,}{\,\Gamma(|m|+\frac{\,n\,}{2})\,},\\
\end{eqnarray*}
which is the desired equality when all components of $\alpha$ are even.  In case when $\alpha$ contains at least one odd 
component, similar expression to (\ref{eqn:1.17}) readily yields $\displaystyle \int_{S^{n-1}}\omega^\alpha\,d\omega=0$,
since for $p\in\N$ and $q$ odd, $\displaystyle \int^\pi_0 \sin^p x\cos^q x\,dx=0.$
The proof of Lemma 3.2 is completed. 

\smallskip

We next give the proof of Lemma 3.3. The partial differentiations of the heat kernel $G_t(x)$ are written in terms of the Hermite 
polynomials $H_n(x)$ which are defined by 
$$\displaystyle 
H_n(x)=(-1)^n\,e^{x^2} \frac{d^n}{dx^n}\bigl( e^{-x^2}\bigr), \,\,\,n=0,1,2,\dots.
$$
In fact, as in \cite{Ch} we have, for a multi-index $\alpha=(\alpha_1,\dots,\alpha_n)\in(\N_0)^n$,
\begin{eqnarray*}\displaystyle 
\partial^\alpha_x G_t(x)&=&\frac{1}{(4\pi t)^{\frac{n}{2}}}\prod^n_{i=1}\, (-1)^{\alpha_i}\,e^{-\bigl(\frac{x_i}{2\sqrt{t}}\bigl)^2}
\Bigl(\frac{1}{\,2\sqrt{t}\,}\Bigr)^{\alpha_i}\,H_{\alpha_i}\bigl(\frac{x_i}{2\sqrt{t}}\bigr)\\
&=& G_t(x)\,\Bigl(-\frac{1}{\,2\sqrt{t}\,}\Bigr)^{|\alpha|}\prod^n_{i=1}H_{\alpha_i}\bigl(\frac{x_i}{2\sqrt{t}}\bigr)\\
&=& \tilde{G}_t (r)\,\Bigl(-\frac{1}{\,2\sqrt{t}\,}\Bigr)^{|\alpha|}\prod^n_{i=1}H_{\alpha_i}\bigl(\frac{r\,\omega_i}{2\sqrt{t}}\bigr).\\
\end{eqnarray*}
On the other hand, since $H_n(x)$ is given explicitly by 
$$\displaystyle 
H_n(x)=(2x)^n-\frac{\,n(n-1)\,}{1!}\,(2x)^{n-2}+\frac{\,n(n-1)(n-2)(n-3)\,}{2!}\,(2x)^{n-4}-\dots
$$
with the last term $\displaystyle (-1)^{\frac{n}{\,2\,}}\,\frac{\,n!\,}{\bigl(\frac{n}{\,2\,}\bigr)!}$ for even $n$ and 
 $\displaystyle (-1)^{\frac{n-1}{2}}\frac{n!}{\bigl(\frac{\,n-1\,}{2}\bigr)!} 2x$ for odd $n$ (see pp.91--93 in \cite{CoHi}), 
a straightforward calculation leads to the expression (\ref{eqn:3.2}).
 
\bigskip

$\bf{Appendix \, B.\,\, Proof \,of \,(\ref{eqn:1.d}).}$

\bigskip

It follows from (\ref{eqn:1.b}) and (\ref{eqn:1.a}) with $i=j$ that 
\begin{eqnarray*}
\M_\alpha(f)\,t^{*,\alpha}&=&-\frac{1}{\,(k+1)(k+2)\,}\M_{\alpha+2e_i}(f)-\frac{1}{\,2!\,}\M_{\alpha}(f)(x^{*,\alpha})^{2e_i}\\ 
&=&-\frac{1}{\,(k+1)(k+2)\,}\left(\M_{\alpha+2e_i}(f)-\frac{\,k+2\,}{2}\M_{\alpha+e_i}(f)\,x^{*,\alpha}\right).\\
\end{eqnarray*}
Here we used the relation such that
\begin{equation}\label{eqn:append1}
\M_{\alpha+e_i}(f)\,x^{*,\alpha}_i=(k+1)\M_\alpha(f)(x^{*,\alpha})^2,
\end{equation}
which is derived from (\ref{eqn:1.13}).
Hence, we have
\begin{equation}\label{eqn:append2}
n(k+1)(k+2)\M_\alpha(f)\,t^{*,\alpha}=-\sum^n_{i=1}\M_{\alpha+2e_i}(f)+\frac{\,k+2\,}{2}\sum^n_{i=1}\M_{\alpha+e_i}(f)\,
x^{*\alpha}_i.
\end{equation}
Then, referring to an elementary equality 
$$\displaystyle 
\frac{\,k+2\,}{2}=2s-\frac{s^2}{\,k+1\,},
$$
where 
$$\displaystyle
s=\frac{\,2(k+1)+\sqrt{2k(k+1)\,}\,}{2} 
$$
and using the relation (\ref{eqn:append1}), we find that (\ref{eqn:append2}) turns out to be 
\begin{eqnarray*}
n(k+1)(k+2)\M_\alpha(f)\,t^{*,\alpha}&=&\displaystyle 
-\sum^n_{i=1}\M_{\alpha+2e_i}(f)+2s\sum^n_{i=1}\M_{\alpha+e_i}(f)\,x_i^{*,\alpha}-
\frac{s^2}{\,k+1\,}\sum^n_{i=1}\M_{\alpha+e_i}(f)\,x^{*,\alpha}_i\\
&=& \displaystyle 
-\sum^n_{i=1}\M_{\alpha+2e_i}(f)+2s\sum^n_{i=1}\M_{\alpha+e_i}(f)\,x_i^{*,\alpha}
-s^2\sum^n_{i=1}\M_\alpha(f)(x_i^{*,\alpha})^2\\
&=&\displaystyle -\sum^n_{i=1}\int_{\R^n}(x_i-s\,x_i^{*,\alpha})^2\,x^\alpha\,f(x)\,dx.
\end{eqnarray*}
Now the proof of (\ref{eqn:1.d}) is completed.

\bigskip

$\bf{Appendix \, C.\,\, Proof \,of \, Proposition \,1.8.}$

Since $f\in C_c(\R^n)$, there exists a compact set $K$ in $\R^n$ such that $\mbox{supp}\, f\subset K$ and $f$ is regarded as a continuous function on $K$. 
Therefore, by Stone-Weierstrass theorem (see, for example,  pp. 138-141 in \cite{Fo}), we can see that there exists 
a sequence of polynomials $\{P_n(x)\}$ on $\R^n$ such that
$$
P_n(x)=\sum_{|\alpha|<\infty}\,c_{\alpha,\,n} \,x^\alpha, \,\,\,c_{\alpha,\,n}\in\R
$$
satisfying that
$$
\sup_{x\in K}\,|f(x)-P_n(x)|<\frac{\,1\,}{n}.
$$
Since $K$ is compact, $P_n(x) f(x)$ converges uniformly on $K$, so that
$$
\lim_{n\rightarrow\infty}\int_K\,P_n(x)f(x)\,dx=\int_K\,|f(x)|^2\,dx.
$$
On the other hand, under the assumption of Lemma 1.7, it is easy to see that 
$$
\int_K\,P_n(x) f(x)\,dx= \sum_{|\alpha|<\infty}\, c_{\alpha,\,n}\int_{\R^n}\, x^\alpha\,f(x)\,dx=0\,\,\mbox{for\,all} \, n\in\N.
$$
Consequently, we can conclude that 
$$
\int_{\R^n}\,|f(x)|^2\,dx=\int_K\,|f(x)|^2\,dx=0,
$$
so that $f(x)\equiv 0$. The proof of Lemma 1.7 is completed.

\bigskip

$\bf{Appendix \, D.\,\, An\, example \, of \, the \, initial \, data \, satisfying \, the \, condition \, (\ref{eqn:a})}.$

We give an example of nontrivial functions $f$ satisfying the condition (\ref{eqn:a}). Assume that  the functions $f$ take the form 
such that 
\begin{align}\label{eq:5.1}
f(x)=\prod_{i=1}^n \, a_i(x_i),
\end{align}
where
$$
a_i(x)=(c_{i,0}+c_{i,1}\,x) \, e^{-x^2}, \qquad c_{i,0}\in\R\setminus\{0\},\, c_{i,1} \in \R,  \;\;i=1,\dots , n.
$$
Put
\begin{align*}
\gamma_j=\int_{\R}x^{2j}\, e^{-x^2}\,dx,\,\,j=0,1,\dots,n.
\end{align*}
It is easily seen that
\begin{align}\label{eq:5.2}
\gamma_j=\frac{|2j-1|!!}{2^j}\sqrt{\pi},\;\;\; \;\; j=0,1,\dots,n.
\end{align}
Then the  0-th and the first order moments of $f$ in (\ref{eq:5.1}) are calculated  as 
\begin{align*}
\int_{\R^n}\,f(x)\,dx&= \prod_{i=1}^n \int_{\R^n}(c_{i,0}+c_{i,1}x_h)\,e^{-x_i^2}\,dx_i=(\,\prod_{i=1}^n\,c_{i,0}\,)\,\gamma_0^n =
\pi^{\frac{n}{2}}\prod_{i=1}^n\,c_{i,0} \n
\end{align*}
and
\begin{align*}
\int_{\R^n}\,x_i\, f(x)\,dx&=\int_{\R^n}x_i\,(c_{i,0}+c_{i,1}x_i)\,e^{-x_i^2}\,dx_i \prod_{\substack{k=1\\ k \neq i}}^n \int_{\R^n}(c_{k,0}+c_{k,1}x_k)\,e^{-x_k^2}\,dx_k =c_{i,1}\, \gamma_1\,(\,\prod_{\substack{k=1 \\ k\neq i}}^n\,c_{k,0}\,)\,\gamma_0^{n-1}\\
& =\frac{\pi^{\frac{n}{2}}}{2}\,c_{i,1} \, (\prod_{\substack{k=1 \\ k\neq i}}^n c_{k,0}\,),\,\,i=1,\dots,n. 
\end{align*}
Therefore, the spatial shift $x_i^*$ defined by (\ref{eqn:1.6}) is given by 
\begin{equation}\label{eq:5.2a}
x_i^*=\frac{1}{\,2\,}\frac{c_{i,1}}{c_{i,0}}, \quad i=1,\dots,n.
\end{equation}
Next, we have a look at  (\ref{eqn:a}) with $i\neq j$. We find by (\ref{eq:5.2a}) that the two terms on the left-hand side of (\ref{eqn:a}) are given by
\begin{align}\label{eq:5.3}
\frac{1}{2}\int_{\R^n}x_ix_jf(x)\,dx
&=\frac{1}{2}\int_{\R^n}x_i\,(c_{i,0}+c_{i,1}x_i)\,e^{-x_i^2}\,dx_j \int_{\R^n}x_j\,(c_{j,0}+c_{j,1}x_j)\,e^{-x_j^2}\,dx_j  \\
&\hspace{170pt} \times \prod_{\substack{k=1 \\ k \neq i,j}}^n \int_{\R^n}(c_{k,0}+c_{k,1}x_k)\,e^{-x_k^2}\,dx_k  \n\\
&= \frac{1}{2}\,c_{i,1}\,c_{j,1}\, \gamma_1^2\,(\,\prod_{\substack{k=1 \\ k\neq i,j}}^n\,c_{k,0}\,)\,\gamma_0^{n-2}\n\\
&=\frac{\pi^{\frac{n}{2}}}{2^3}c_{i,1}\,c_{j,1}\,(\,\prod_{\substack{k=1 \\ k\neq i,j}}^n\,c_{k,0}\,), \n
\end{align}
and
\begin{align}\label{eq:5.4}
\frac{1}{2}\int_{\R^n}f(x)\,dx\,x_i^*\,x_j^*&=\prod_{k=1}^n\,c_{k,0} \, \,\pi^{\frac{n}{2}}\frac{1}{\,2\,}\frac{c_{i,1}}{c_{i,0}}\frac{1}{\,2\,}\frac{c_{j,1}}{c_{j,0}}   \\
&=\frac{\pi^{\frac{n}{2}}}{2^3}c_{i,1}\,c_{j,1}\,(\,\prod_{\substack{k=1 \\ k\neq i ,j}}^n\,c_{k,0}\,).\hspace{100pt} \n
\end{align}
Consequently, we find by (\ref{eq:5.3}) and (\ref{eq:5.4}) that the condition (\ref{eqn:a}) with $i\neq j$ is automatically satisfied for $f$ in (\ref{eq:5.1}). 
On the other hand, the left-hand side of (\ref{eqn:a}) with $i=j$ turns out 
\begin{align*}
&\frac{1}{2}\int_{\R^n}x_i^2\,f(x)\,dx-\frac{1}{2}\int_{\R^n}f(x)\,dx\,(x_i^*)^2  \\
&=\frac{1}{2}\int_{\R^n}x_i^2(c_{i,0}+c_{i,1}x_i)\,e^{-x_i^2}\,dx_i \prod_{\substack{k=1 \\ k \neq i}}^n \int_{\R^n}(c_{k,0}+c_{k,1}x_h)\,e^{-x_k^2}\,dx_k -\frac{1}{2}\prod_{k=1}^n\,c_{k,0} \, \pi^{\frac{n}{2}}\left(\frac{1}{\,2\,}\frac{c_{i,1}}{c_{i,0}}\right)^2 \n \\
&=\frac{1}{2}\,c_{i,0}\,\gamma_1 (\prod_{\substack{k=1 \\ k\neq i}}^n c_{k,0}\,)\,\gamma_0^{n-1}-\frac{1}{2}\prod_{k=1}^n\,c_{k,0} \, \pi^{\frac{n}{2}}\left(\frac{1}{\,2\,}\frac{c_{i,1}}{c_{i,0}}\right)^2 \n \\
&=\frac{\pi^{\frac{n}{2}}}{4}\prod_{k=1}^n\,c_{k,0}\left( 1- \frac{1}{\,2\,}\left(\frac{c_{i,1}}{c_{i,0}}\right)^2 \right). \n
\end{align*}
Accordingly,  if we take the coefficients $c_{i,0}$ and $c_{i,1}$ so that $\dis \frac{c_{i,1}}{\,c_{i,0}\,}=c$ for some constant $c\in\R$,  
for all $i=1,\dots,n$, we can see that the condition (\ref{eqn:a}) with $i=j$ holds for $\displaystyle c_0=\frac{\pi^{\frac{n}{2}}}{4}\prod_{k=1}^n\,c_{k,0}\left( 1- \frac{1}{\,2\,} c^2 \right)$. Then it is easy to see that the spatial shift $x^*$ defined by (\ref{eqn:1.6}) and  the time shift $t^*$ defined by (\ref{eqn:c}) is given as
\begin{align*}
x_i^*= \frac{c}{\,2\,}, \;\;\; i=1,\dots,n, \qquad t^*=-\frac{1}{\;4\;} \, \left(1-\frac{c^2}{\,2\,} \right)\,\,\,\mbox{for \, some \,constant}\,c\in\R.
\end{align*}
\\

\bigskip

$\bf{Appendix \, E.\,\, An\, example \, of \, the \, initial \, data \, satisfying \, the \, Condition \, A.}$

We give an example of nontrivial functions  $f$ satisfying Condition A  in the form such that 
\begin{align}\label{eq:5.5}
f(x)=\prod_{i=1}^n \, a_i(x_i),
\end{align}
where
$$
a_i(x)=\sum_{j=0}^{k+1}(c_{i,j} \, x^{j}) \, e^{-x^2}, \qquad c_{i,j} \in \R, \,i=1,\dots,n,\,j-0,1,\dots,k+1.
$$
Put
\begin{align*}
\X_i^{(m)}=\int_{\R}x_j^{m}\,a_i(x_j)\,dx_j.
\end{align*}
We note that, for $\al=(\al_1,\dots,\al_n)\in (\N_0)^n$ and $i,j=1,\dots,n$,  
\begin{align*}
\M_{\al}(f)=\prod_{h=1}^n \X_h^{(\al_h)}, \;\; 
\M_{\al+e_i}(f)=\prod_{h=1}^n \X_h^{(\al_h+\delta_{hi})} \;\; \mbox{and}\; \M_{\al+e_i+e_j}(f)=\prod_{h=1}^n \X_h^{(\al_h+\delta_{hi}+\delta_{hj})}.
\end{align*}
Thus, the spatial shifts $x^{*,\al}$, $\al\in\Lambda_k(f)$, defined by (\ref{eqn:1.13}) can be given by 
$$
x_i^{*,\al}=\frac{1}{\,k+1\,}\frac{\X_i^{(\al_i+1)}}{\X_i^{(\al_i)}}, \quad i=1,\dots,n.
$$
We next turn to look at (\ref{eqn:1.a}). The terms on the left-hand side of (\ref{eqn:1.a}) is represented by
\begin{align}\label{eq:5.6}
&\frac{1}{(k+1)(k+2)}\M_{\al+e_i+e_j}(f)-\frac{1}{2!}\M_{\al}(f)(x^{*,\al})^{e_i+e_j} \\
&\phantom{111111}=\frac{1}{(k+1)(k+2)}\prod_{h=1}^n\X_h^{(\al_h+\delta_{hi}+\delta_{hj})}-\frac{1}{2(k+1)^2}\prod_{h=1}^n\X_h^{(\al_h)}\frac{\X_i^{(\al_i+1)}}{\X_i^{(\al_i)}}\frac{\X_j^{(\al_j+1)}}{\X_j^{(\al_j)}}. \n 
\end{align}
Therefore, Condition A can be rewritten as follows. 
For all $\alpha\in\Lambda_k(f)$, there exist constants $c_\alpha\in\R$ depending only on $\alpha$ 
such that  
\begin{multline}\label{eqn:re1.a}
\frac{1}{(k+1)(k+2)}\prod_{h=1}^n\X_h^{(\al_h+\delta_{hi}+\delta_{hj})}-\frac{1}{2(k+1)^2}\prod_{h=1}^n\X_h^{(\al_h)}\frac{\X_i^{(\al_i+1)}}{\X_i^{(\al_i)}}\frac{\X_j^{(\al_j+1)}}{\X_j^{(\al_j)}}=c_\al \delta_{ij}\\
 \mbox{for all}\,\,i,j=1,\dots,n.
\end{multline}
While, for all $\alpha\notin\Lambda_k(f)$ with $|\alpha|=k$, the moments of the $f$ satisfy that 
\begin{equation}\label{eqn:re1.c}
\prod_{h=1}^n\X_h^{(\al_h+\delta_{hi})}=\prod_{h=1}^n\X_h^{(\al_h+\delta_{hi}+\delta_{hj})}=0\,\,\,\mbox{for all} \,\,i,j=1,\dots,n.
\end{equation}
When Condition A holds, the time shifts $t^{*,\al}$, $\alpha\in\Lambda_k(f)$, given by (\ref{eqn:1.b}) can be seen to obey 
\begin{align*}
t^{*,\al}=-\biggl\{\frac{1}{(k+1)(k+2)}\frac{\X_i^{(\al_i+2)}}{\X_i^{(\al_i)}}-\frac{1}{2(k+1)^2}\biggl(\frac{\X_i^{(\al_i+1)}}{\X_i^{(\al_i)}}\biggr)^2\biggr\}.
\end{align*}
because
\begin{multline*}
\frac{1}{(k+1)(k+2)}\prod_{h=1}^n\X_h^{(\al_h+2\delta_{hi})}-\frac{1}{2(k+1)^2}\prod_{h=1}^n\X_h^{(\al_h)}\biggl(\frac{\X_i^{(\al_i+1)}}{\X_i^{(\al_i)}}\biggr)^2\\
=
\M_\al(f)\biggl\{\frac{1}{(k+1)(k+2)}\frac{\X_i^{(\al_i+2)}}{\X_i^{(\al_i)}}-\frac{1}{2(k+1)^2}\biggl(\frac{\X_i^{(\al_i+1)}}{\X_i^{(\al_i)}}\biggr)^2\biggr\}.
\end{multline*}
Therefore, when $k=1$, taking in particular 
$\dis (c_{1,0}\,,c_{1,1}\,,c_{1,2})=(\mathcal{C}_1 , \pm\sqrt{2}\,\mathcal{C}_1,-2\,\mathcal{C}_1)$ 
and $\dis(c_{i,0}\,,c_{i,1}\,,c_{i,2})=(\mathcal{C}_i,\,0,\,-\frac{2}{3}\,\mathcal{C}_i)$, $i=2,\dots,n$, with arbitrary constants 
$\mathcal{C}_i\in \R\setminus\{0\}$, $i=1,2,\dots,n$, it is easy to see that the function $f$ in (\ref{eq:2.15}) 
satisfies Condition A and the spatial and time shifts $x^{*,\alpha}$ and $t^{*,\alpha}$, $\alpha\in\Lambda_1(f)$,  are given by
$$ \Lambda_{1}(f)=\{e_1\}, \,\, x_1^{*,e_1}=\mp\sqrt{2},\,\,x_i^{*,e_1}=0\,\,
\mbox{for}\,\,i=2,\dots,n,\,\,t^{*,e_1}=0.
$$

\bigskip\noindent
{\bf Acknowledgments.} 
The authors would like to express their thanks to Professor Hiroshi Ishii for providing information on the paper \cite{Ch}. 
The work of the first author is supported by JST, the establishment of
university fellowships towards the creation of science technology innovation,
Grant Number JPMJFS 2127.
The work of the second author is partially supported by 
JSPS KAKENHI Grant Number 22K03375 . The authors declare no conflicts of interest.

\end{document}